\newcommand{\NN}{\mathbb{N}}
\newcommand{\w}{\wedge}
\newcommand{\he}{\hat{e}}
\newcommand{\hk}{\hat{k}}
\newcommand{\hl}{\hat{l}}
\DeclareMathOperator{\res}{Res}
\DeclareMathOperator{\ind}{Ind}
\DeclareMathOperator{\Rad}{Rad}
\DeclareMathOperator{\Hd}{Hd}
\DeclareMathOperator{\supp}{supp}
\DeclareMathOperator{\Tr}{Tr}
\renewcommand{\leq}{\leqslant}
\renewcommand{\geq}{\geqslant}
\renewcommand{\unlhd}{\trianglelefteqslant}
\begin{document}

\swapnumbers
\theoremstyle{definition}
\newtheorem{defi}{Definition}[section]
\newtheorem{rem}[defi]{Remark}
\newtheorem{ques}[defi]{Question}
\newtheorem{expl}[defi]{Example}
\newtheorem{conj}[defi]{Conjecture}
\newtheorem{claim}[defi]{Claim}
\newtheorem{nota}[defi]{Notation}
\newtheorem{noth}[defi]{}

\theoremstyle{plain}
\newtheorem{prop}[defi]{Proposition}
\newtheorem{lemma}[defi]{Lemma}
\newtheorem{cor}[defi]{Corollary}
\newtheorem{thm}[defi]{Theorem}

\renewcommand{\proofname}{\textsl{\textbf{Proof}}}

\baselineskip=14pt

\begin{center}
{\bf\Large Vertices of Simple Modules of Symmetric Groups \\ Labelled by Hook Partitions}

\medskip
 
Susanne Danz and Eugenio Giannelli 

\today

\begin{abstract}
\noindent
In this article we study the vertices of simple modules for the symmetric groups in prime characteristic $p$. In particular, we complete the classification of the vertices of simple $F\mathfrak{S}_n$-modules labelled by hook partitions.

\smallskip

\noindent
{\bf Mathematics Subject Classification (2010):} 20C20, 20C30.

\noindent
{\bf Keywords:} symmetric group, simple module, hook partition, vertex.

\end{abstract}
\end{center}

\section{Introduction}\label{sec intro}

Introduced by J.~A.~Green in 1959 \cite{Gr}, {\sl vertices} of indecomposable modules over modular
group algebras have proved to be important invariants linking the global and local
representation theory of finite groups over fields of positive characteristic. 
Given a finite group $G$ and a field $F$ of characteristic $p>0$,
by Green's result, the
vertices of every indecomposable $FG$-module form a $G$-conjugacy class of $p$-subgroups of $G$.
Moreover, vertices of simple $FG$-modules
are known to satisfy a number of very restrictive properties, most notably in consequence of Kn\"orr's Theorem  \cite{Kn}.
The latter, in particular, implies that vertices of simple $FG$-modules in blocks with abelian defect groups
have precisely these defect groups as their vertices. Despite this result, the precise structure of
vertices of simple $FG$-modules is still poorly understood, even for very concrete groups and modules.

\smallskip

The aim of this paper is to complete the description of the vertices of a distinguished class of simple modules
of finite symmetric groups. 
Throughout,  let $n\in\NN$, and let $\mathfrak{S}_n$ be the symmetric group of degree $n$.
 Then, as is well known, the isomorphism classes of simple $F\mathfrak{S}_n$-modules
are labelled by the $p$-regular partitions of $n$.  We denote the simple $F\mathfrak{S}_n$-module corresponding to
a $p$-regular  partition $\lambda$ by $D^\lambda$. If $\lambda=(n-r,1^r)$, for some $r\in\{0,\ldots,p-1\}$, then
$\lambda$ is called a $p$-regular {\sl hook partition} of $n$. Whilst, in general, even the dimensions of the  simple 
$F\mathfrak{S}_n$-modules are unknown, one has a neat description of an $F$-basis of $D^{(n-r,1^r)}$; we shall
comment on this in \ref{noth exterior} below.

The problem of determining the vertices of the simple $F\mathfrak{S}_n$-module $D^{(n-r,1^r)}$ has been 
studied before by Wildon in \cite{W}, by M\"uller and Zimmermann in \cite{MZ}, and by the first author in \cite{D}.
In consequence of these results, the vertices of $D^{(n-r,1^r)}$ have been known, except in the case
where $p>2$, $r=p-1$ and $n\equiv p\pmod{p^2}$. In Section~\ref{sec proof} of the current paper we shall now prove the
following theorem, which together with \cite[Corollary~5.5]{D} proves \cite[Conjecture~1.6(a)]{MZ}.

\begin{thm}\label{thm main}
Let $p>2$, let $F$ be a field of characteristic $p$, and let $n\in\NN$ be such  that $n\equiv p\pmod{p^2}$. Then
the vertices of the simple $F\mathfrak{S}_n$-module $D^{(n-p+1,1^{p-1})}$ are precisely the Sylow $p$-subgroups
of $\mathfrak{S}_n$.
\end{thm}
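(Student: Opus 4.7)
An abacus computation (using $p\mid n$) shows that the $p$-core of $(n-p+1,1^{p-1})$ is empty, so $M:=D^{(n-p+1,1^{p-1})}$ lies in the principal $p$-block of $F\mathfrak{S}_n$. The defect groups of that block are the Sylow $p$-subgroups, so every vertex of $M$ is contained in a Sylow $p$-subgroup $P\leq\mathfrak{S}_n$; the task is to rule out proper containment. To do so I would verify that the Brauer quotient $M(P)=M^P/\sum_{Q<P}\Tr^P_Q(M^Q)$ is nonzero, by exhibiting an explicit $P$-invariant $v\in M$ that is not a sum of elements transferred from proper subgroups of $P$.

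\textbf{An explicit model of $M$.} I would use the realisation of $M$ indicated in \ref{noth exterior}: for $r=p-1$ and $p\mid n$, the simple module $D^{(n-r,1^r)}$ appears as an explicit subquotient of $\Lambda^{p-1}(E)$, where $E:=D^{(n-1,1)}$ is the natural $F\mathfrak{S}_n$-module of dimension $n-2$. This gives a hook-type basis of $M$, indexed (after identification) by $(p-1)$-element subsets of $\{2,\dots,n\}$, on which the action of $\mathfrak{S}_n$ and the transfer maps $\Tr^P_Q$ can be computed coordinate-by-coordinate.

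\textbf{Construction of the $P$-fixed vector.} Write $n=p+kp^2$ with $k\geq 1$ and decompose $\{1,\dots,n\}=A\sqcup B$ with $|A|=p$ and $|B|=kp^2$. Take $P=P_A\times P_B$, where $P_A=\langle(1\,2\,\cdots\,p)\rangle$ is the obvious $C_p$ on $A$ and $P_B\in\syl_p(\mathfrak{S}_B)$. In $E$ form the $P_A$-invariant element $a:=\sum_{i\in A}e_i$, then build $v\in\Lambda^{p-1}(E)$ by wedging $a$ with $(p-2)$ further $P_B$-invariants obtained from orbit sums on $B$, and pass to the image of $v$ in $M$. The hypothesis $n\equiv p\pmod{p^2}$ enters here through the relation $\sum_{i=1}^n e_i=0$ coming from $E=S^{(n-1,1)}/\langle\mathbf{1}\rangle$: since $n/p\equiv 1\pmod p$, the image of $a$ remains nonzero, which in turn forces the coefficient of a distinguished hook-basis element of $v$ (the one indexed by a specific $(p-1)$-subset related to $A$) to be nonzero in $F$.

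\textbf{The main obstacle.} The technical heart of the argument is to prove $v\notin\sum_{Q<P}\Tr^P_Q(M^Q)$; equivalently, to show the distinguished coordinate of $v$ singled out above is zero in every transfer from a maximal subgroup $Q<P$. For each such $Q$, expanding $\Tr^P_Q(w)$ in the hook basis amounts to summing over a $P/Q$-orbit of $Q$-fixed basis elements; the task is to check that none of these orbits contributes to the distinguished coordinate. The case analysis runs over all maximal subgroups of $P$ --- those arising from $P_A$, from maximal subgroups of $P_B$, and, crucially, the ``diagonal'' maximal subgroups mixing $P_A$ and $P_B$ --- and is exactly the step where the sharper congruence $n\equiv p\pmod{p^2}$, as opposed to merely $p\mid n$, is needed to prevent the detecting coefficient from collapsing to zero modulo $p$. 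This combinatorial bookkeeping is the most delicate part of the argument, and is precisely the gap left open by \cite{W,MZ,D}; combining it with \cite[Corollary~5.5]{D} then yields the full conjecture \cite[Conjecture~1.6(a)]{MZ}.
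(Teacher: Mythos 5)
There is a genuine gap, and it stems from your choice of strategy. You propose to prove $M(P)\neq\{0\}$ for $P$ a \emph{full} Sylow $p$-subgroup and then invoke Brou\'e's criterion. But the decisive step --- checking that the distinguished coordinate of $\Tr_Q^P(w)$ vanishes for every maximal subgroup $Q<P$ and every $w\in M^Q$ --- is exactly what you label ``the main obstacle'' and leave undone. This is not routine bookkeeping: since $D^{(n-p+1,1^{p-1})}$ is not known to be a $p$-permutation module, there is no a priori reason why the Brauer quotient at a vertex should be nonzero (the paper recalls in \ref{noth Brauer} that the converse of Brou\'e's implication fails in general), so your approach may be attempting to verify a statement that is simply false for the full Sylow subgroup; at best it is the entire content of the theorem, not a computation one can defer. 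Moreover the maximal subgroups of $P_n$, a direct product of iterated wreath products, are far more numerous and structurally complicated than your sketch acknowledges, and nothing you write controls the ``diagonal'' ones.

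The paper deliberately avoids this computation. It proves only the weaker statement $D(E)\neq\{0\}$ for the \emph{elementary abelian} subgroup $E=E(1,n_2,\ldots,n_r)\leq P_n$ (Proposition~\ref{prop Brauer D(E)}), using the single basis element $e=e_2\wedge\cdots\wedge e_p$ as detecting coordinate via Proposition~\ref{prop Brauer}; there the maximal subgroups are manageable because $E$ is elementary abelian (Goursat's lemma, Lemma~\ref{lemma max in E}). This yields only $E\leq_{\mathfrak{S}_n}Q$ for a vertex $Q$, which by itself is insufficient; it is combined with the second lower bound $P_{n-2p}\leq_{\mathfrak{S}_n}Q$ coming from Wildon's theorem via \cite[Proposition~5.2]{D} --- an input your proposal cites only as historical background and never actually uses --- and the purely group-theoretic Proposition~\ref{prop P and E in Q}, which shows that any subgroup of $P_n$ containing conjugates of both $E$ and $P_{n-2p}$ must equal $P_n$. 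Without either (a) a genuine proof that $M(P_n)\neq\{0\}$, or (b) the Wildon--Danz lower bound together with such a closing group-theoretic argument, your sketch does not prove the theorem. (Two minor points: the block-theoretic opening is superfluous, since any vertex is a $p$-group and hence lies in some Sylow $p$-subgroup; and for $r=p-1\leq n-2$ one has $\bigwedge^{p-1}D^{(n-1,1)}\cong D^{(n-p+1,1^{p-1})}$ itself, not merely a proper subquotient of $\bigwedge^{p-1}$ of the natural module.)
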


Our key ingredients  for proving Theorem~\ref{thm main} will be the Brauer construction in the sense of Brou\'e \cite{Br}
and Wildon's result in \cite{W}. Both of these will enable us to obtain lower bounds on the vertices of $D^{(n-p+1,1^{p-1})}$,
which together will then provide sufficient information to deduce Theorem~\ref{thm main}.

To summarize, the abovementioned results in \cite{D,MZ,W} and Theorem~\ref{thm main} 
lead to the following exhaustive description of the vertices of the modules $D^{(n-r,1^r)}$:

\begin{thm}\label{thm vertices}
Let $F$ be a field of characteristic $p>0$, and let $n\in\NN$. Let further $r\in\{0,1\ldots,p-1\}$, and
let $Q$ be a vertex of the simple $F\mathfrak{S}_n$-module $D^{(n-r,1^r)}$.

\smallskip

{\rm (a)}\, If $p\nmid n$ then $Q$ is $\mathfrak{S}_n$-conjugate to a Sylow $p$-subgroup
of $\mathfrak{S}_{n-r-1}\times \mathfrak{S}_r$.

\smallskip

{\rm (b)}\, If $p=2$, $p\mid n$ and $(n,r)\neq (4,1)$ then $Q$ is a Sylow $2$-subgroup 
of $\mathfrak{S}_n$.

\smallskip

{\rm (c)}\, If $p=2$, $n=4$ and $r=1$ then $Q$ is the unique Sylow $2$-subgroup
of $\mathfrak{A}_4$.

\smallskip

{\rm (d)}\, If $p>2$ and $p\mid n$ then $Q$ is a Sylow $p$-subgroup of $\mathfrak{S}_n$.
\end{thm}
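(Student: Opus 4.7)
The plan is to prove Theorem~\ref{thm vertices} by an exhaustive case analysis on the parameters $(p,n,r)$, invoking in each case the appropriate pre-existing theorem from \cite{W,MZ,D} or Theorem~\ref{thm main} above. Since Theorem~\ref{thm vertices} is intended as a complete summary of what is now known about vertices of hook simples, the task reduces to verifying that the four cases (a)--(d) partition all admissible parameter regimes and that each is covered by precisely one of the cited results.

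For case~(a), where $p\nmid n$, I would appeal directly to Wildon's theorem in \cite{W}: in this regime the block of $F\mathfrak{S}_n$ containing $D^{(n-r,1^r)}$ does not have full defect, and Wildon's analysis identifies the vertex as a Sylow $p$-subgroup of $\mathfrak{S}_{n-r-1}\times\mathfrak{S}_r$. Cases~(b) and~(c), namely $p=2$ with $2\mid n$, are handled by the results of M\"uller and Zimmermann in \cite{MZ}; the exceptional module $D^{(3,1)}$ of $F\mathfrak{S}_4$ is known to have as vertex the unique (normal) Sylow $2$-subgroup of $\mathfrak{A}_4$, while for every other $n$ with $2\mid n$ the vertex is a full Sylow $2$-subgroup of $\mathfrak{S}_n$.

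Case~(d), where $p>2$ and $p\mid n$, is the only case that requires further subdivision. For $r\in\{0,1,\ldots,p-2\}$ the results of \cite{W,MZ,D} already show that the vertex is a Sylow $p$-subgroup of $\mathfrak{S}_n$, since $D^{(n-r,1^r)}$ then lies in a block of maximal defect. For $r=p-1$ one splits further by the residue of $n$ modulo $p^2$: the sub-case $n\not\equiv p\pmod{p^2}$ is settled by \cite[Corollary~5.5]{D}, and the remaining sub-case $n\equiv p\pmod{p^2}$ is precisely the content of Theorem~\ref{thm main}, whose proof occupies Section~\ref{sec proof}.

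The only substantive obstacle is therefore this last sub-case, $p>2$, $r=p-1$, $n\equiv p\pmod{p^2}$; once Theorem~\ref{thm main} is in hand, the remainder of Theorem~\ref{thm vertices} is a pure cataloguing exercise amounting to checking that the four regimes (a)--(d) are jointly exhaustive and individually matched to a known theorem. As previewed in the introduction, Theorem~\ref{thm main} itself will be established by combining two lower bounds on the vertex of $D^{(n-p+1,1^{p-1})}$, one arising from the Brauer construction in the sense of Brou\'e \cite{Br} and one from Wildon's result in \cite{W}; together these force the vertex to be a full Sylow $p$-subgroup of $\mathfrak{S}_n$.
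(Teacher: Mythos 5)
Your proposal follows essentially the same route as the paper: Theorem~\ref{thm vertices} is proved by matching each of the four parameter regimes to the corresponding results in \cite{W}, \cite{MZ} and \cite{D}, with the single remaining sub-case $p>2$, $r=p-1$, $n\equiv p\pmod{p^2}$ supplied by Theorem~\ref{thm main}. The one point worth tightening is case~(a): the reason Wildon's theorem on vertices of Specht modules applies to $D^{(n-r,1^r)}$ is Peel's result \cite{P} that $S^{(n-r,1^r)}$ is simple, hence isomorphic to $D^{(n-r,1^r)}$, when $p\nmid n$ --- not a block-defect argument (and likewise, in case~(d) the fact that the module lies in a block of maximal defect does not by itself force the vertex to be a full Sylow subgroup; that is the content of the cited theorems).
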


In the case where $p\nmid n$,
the simple module $D^{(n-r,1^r)}$ is isomorphic to the Specht $F\mathfrak{S}_n$-module $S^{(n-r,1^r)}$, by work of Peel \cite{P}.
Thus assertion~(a) follows immediately from \cite[Theorem 2]{W}.
Assertions (b) and (c) have been established by M\"uller and Zimmermann \cite[Theorem~1.4]{MZ}.
Moreover, if $p>2$, $p\mid n$ and $r<p-1$ then assertion (d) can also be found in \cite[Theorem~1.2]{MZ}.
The case where $p>2$, $p\mid n$, $r=p-1$ was treated in \cite[Corollary~5.5]{D}, except when
$n\equiv p\pmod{p^2}$, which is covered by Theorem~\ref{thm main} above. 

\medskip

We should also like to comment on the sources of the simple
$F\mathfrak{S}_n$-modules $D^{(n-r,1^r)}$.
For $r=0$, we get the trivial $F\mathfrak{S}_n$-module $D^{(n)}$, which has of course trivial source.
 If $p\mid n$, then the module $D^{(n-1,1)}$
restricts indecomposably to its vertices, by \cite[Theorems~1.3, 1.5]{MZ}, except when $p=2$ and $n=4$.
For $p=2$, the simple $F\mathfrak{S}_4$-module $D^{(3,1)}$ has trivial source, by  \cite[Theorem~1.5]{MZ}.
If $p\nmid n$ then $D^{(n-r,1^r)}\cong S^{(n-r,1^r)}$ has always trivial sources; see, for instance
 \cite[Theorem~1.3]{MZ}. However, in the case where $p>2$, $p\mid n$ and $r>1$, we do not know
 the sources of $D^{(n-r,1^r)}$. In these latter cases,  the restrictions of $D^{(n-r,1^r)}$
 to its vertices should, conjecturally, be indecomposable, hence should be sources of $D^{(n-r,1^r)}$; see \cite[Conjecture~1.6(b)]{MZ}.
This conjecture has been verified computationally in several cases, see \cite{D,MZ}, but remains still open in general.

\bigskip

\noindent
{\bf Acknowledgements:} The first author has been supported through DFG Priority Programme `Representation Theory' (Grant  \# DA1115/3-1),
and a Marie Curie Career Integration Grant (PCIG10-GA-2011-303774). The results of this article were achieved during the visit of the second author to the 
University of Kaiserslautern. He gratefully acknowledges his PhD supervisor Dr.~Mark Wildon for supporting the visit. 
He also thanks the research group \textsl{Algebra, Geometry and Computer Algebra} at Kaiserslautern
for their kind hospitality.


\section{Prerequisites}\label{sec pre}

Throughout this section, let $F$ be a field of characteristic $p>0$.
We begin by introducing some basic notation that we shall use repeatedly throughout subsequent sections.
Whenever $G$ is a finite group, $FG$-modules are always understood to be finite-dimensional left modules.
Whenever $H$ and $K$ are subgroups of $G$ such that $H$ is $G$-conjugate to a subgroup of $K$, we write
$H\leq_G K$. If $H$ and $K$ are $G$-conjugate then we write $H=_G K$. For $g\in G$, we set ${}^gH:=gHg^{-1}$.

We assume the reader to be familiar with the basic concepts of the representation theory of the symmetric groups.
For background information we refer to \cite{J,JK}.
As usual, for $n\in\NN$, we shall denote the Specht $F\mathfrak{S}_n$-module labelled by a partition $\lambda$ of $n$ by $S^\lambda$.
If $\lambda$ is a $p$-regular partition of $n$ then we shall denote the simple $F\mathfrak{S}_n$-module
$S^\lambda/\Rad(S^\lambda)$ by $D^\lambda$.

\begin{noth}{\bf Brauer constructions and vertices.}\label{noth Brauer}
(a)\, Let $G$ be a finite group, let $M$ be an $FG$-module, and let $P$ be a $p$-subgroup of $G$.
The {\sl Brauer construction} of $M$ with respect to $P$ is defined as
\begin{equation}\label{eqn M(P)}
M(P):=M^P/\sum_{Q<P}\Tr_Q^P(M^Q)\,,
\end{equation}
where $M^P$ denotes the set of $P$-fixed points of $M$, and $\Tr_Q^P:M^Q\to M^P,\; m\mapsto \sum_{xQ\in P/Q} xm$
denotes the relative trace map. The latter is independent of the choice
of representatives of the left cosets $P/Q$. The $FG$-module structure of $M$
induces an $FN_G(P)$-module structure on the $F$-vector space $M(P)$, and $P$ acts trivially on $M(P)$.
Set $\Tr^P(M):=\sum_{Q<P}\Tr_Q^P(M^Q)$.

Moreover, if $R<Q<P$ then $\Tr_R^P=\Tr_Q^P\circ\Tr_R^Q$. Thus
$M(P)=M^P/\sum_{Q<_{\mathrm{max}}P}\Tr_Q^P(M^Q)$, where $Q<_{\mathrm{max}}P$ denotes a maximal subgroup of $P$.
If $Q<_{\mathrm{max}}P$ then every element $g\in P\smallsetminus Q$ has the property that $\{1,g,g^2,\ldots,g^{p-1}\}$ is a set of representatives of the left cosets of $Q$ in $P$; in particular, we get 
$\Tr_Q^P(m)=m+gm+\cdots +g^{p-1}m$, for $m\in M^Q$.

\smallskip

(b)\, Suppose that $M$ is an indecomposable $FG$-module. Then a {\sl vertex} of $M$ is a subgroup
$Q$ of $G$ that is minimal with respect to the property that $M$ is isomorphic to a direct summand of $\ind_Q^G(\res_Q^G(M))$. By \cite{Gr},
the vertices of $M$ form a $G$-conjugacy class of $p$-subgroups of $G$. Moreover, if $R\leq G$ is a $p$-subgroup
such that $M(R)\neq \{0\}$ then $R\leq_G Q$, by \cite[(1.3)]{Br}. The converse is, however, not true in general.
\end{noth}

For proofs of the abovementioned properties of Brauer constructions, see \cite{Br}.
Details on the theory of vertices of indecomposable $FG$-modules can be found in \cite[Section~9]{Alperin} or \cite[Section~4.3]{NT}.
The following will be very useful for proving Theorem~\ref{thm main} in Section~\ref{sec proof} below.
The proof is straightforward, and is thus left to the reader.

\begin{prop}\label{prop Brauer}
Let $G$ be a finite group, let $M$ be an $FG$-module with $F$-basis $B$, and let $P\leq G$ be a $p$-group.
Suppose that there is some $b_0\in B$ satisfying the following properties:

\smallskip

{\rm (i)}\, $b_0\in M^P$;

{\rm (ii)}\, whenever $Q<_{\max} P$, $u\in M^Q$ and $\Tr_Q^P(u)=\sum_{b\in B}a_b(u) b$, for $a_b(u)\in F$, one has
$a_{b_0}(u)=0$.

\noindent
Then $b_0+\Tr^P(M)\in M(P)\smallsetminus\{0\}$.
\end{prop}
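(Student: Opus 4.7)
The plan is to show that $b_0$, viewed as an element of $M^P$, does not lie in the subspace $\Tr^P(M) = \sum_{Q<P} \Tr_Q^P(M^Q)$. Since condition (i) guarantees $b_0 \in M^P$, the coset $b_0 + \Tr^P(M)$ is a well-defined element of $M(P) = M^P/\Tr^P(M)$, and the non-vanishing of this coset is exactly the claim.

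First I would invoke the reduction noted in \ref{noth Brauer}(a): because $\Tr_R^P = \Tr_Q^P \circ \Tr_R^Q$ whenever $R < Q < P$, one has
\[
\Tr^P(M) \;=\; \sum_{Q <_{\max} P} \Tr_Q^P(M^Q).
\]
Thus every element $m \in \Tr^P(M)$ can be written as a finite sum $m = \sum_i \Tr_{Q_i}^P(u_i)$ with $Q_i <_{\max} P$ and $u_i \in M^{Q_i}$.

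Next, for each $b \in B$, let $a_b : M \to F$ denote the $F$-linear coefficient functional extracting the $b$-component with respect to the basis $B$. Hypothesis (ii) asserts precisely that $a_{b_0}(\Tr_Q^P(u)) = 0$ for every maximal subgroup $Q <_{\max} P$ and every $u \in M^Q$. Applying $a_{b_0}$ to the expression $m = \sum_i \Tr_{Q_i}^P(u_i)$ and using linearity yields $a_{b_0}(m) = 0$ for all $m \in \Tr^P(M)$. On the other hand, $a_{b_0}(b_0) = 1 \neq 0$, so $b_0 \notin \Tr^P(M)$, which completes the argument.

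There is no real obstacle here: the proof is a direct verification once the reduction to maximal subgroups of $P$ is recognised as the key input. The only substantive content is that hypothesis (ii) needs only to be checked for $\Tr_Q^P(u)$ with $Q$ \emph{maximal} in $P$ (and $u \in M^Q$), rather than for all proper subgroups $Q < P$, which is exactly what makes this criterion practical in applications such as the proof of Theorem~\ref{thm main}.
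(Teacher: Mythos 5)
Your proof is correct, and since the paper explicitly leaves this proposition to the reader as "straightforward," your argument is precisely the intended one: reduce $\Tr^P(M)$ to a sum over maximal subgroups via transitivity of the trace maps, then observe that the linear coefficient functional $a_{b_0}$ kills $\Tr^P(M)$ by hypothesis (ii) while $a_{b_0}(b_0)=1$. No gaps.
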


Next we shall recall some well-known properties of the simple $F\mathfrak{S}_n$-modules labelled by hook partitions
$(n-r,1^r)$, for $r\in\{0,\ldots,p-1\}$, that we shall need repeatedly in the proof of Theorem~\ref{thm main}.
In particular, we shall fix a convenient $F$-basis of $D^{(n-r,1^r)}$. In light of Theorem~\ref{thm main}
we shall only be interested in the case where $p\mid n$ and $p>2$.

\begin{noth}{\bf Exterior powers of the natural $F\mathfrak{S}_n$-module.}\label{noth exterior}
(a)\, Let $p>2$, let $n\in\NN$ be such that $p\mid n$, and let
$M:=M^{(n-1,1)}$ be the natural permutation $F\mathfrak{S}_n$-module, with natural permutation basis
$\Omega=\{\omega_1,\ldots,\omega_n\}$. Since $p\mid n$, the module $M$ is uniserial with composition series
$\{0\}\subset M_2\subset M_1\subset M$, where $M_1=\{\sum_{i=1}^na_i\omega_i: a_1,\ldots,a_n\in F,\, \sum_{i=1}^na_i=0\}$
and $M_2=\{a\sum_{i=1}^n\omega_i: a\in F\}$; see, for instance, \cite[Example~5.1]{J}.

Furthermore, $M_1=S^{(n-1,1)}$, and $M_1/M_2=:\Hd(S^{(n-1,1)})\cong D^{(n-1,1)}$; in particular, $\dim_F(D^{(n-1,1)})=n-2$.
One sometimes calls $D^{(n-1,1)}$ the {\sl natural (simple) $F\mathfrak{S}_n$-module}.
An $F$-basis of $M_1$ is given by the elements $\omega_i-\omega_1$, where $i\in\{2,\ldots,n\}$.
In the following, we shall identify the module $D^{(n-1,1)}$ with $M_1/M_2$. 

Consider the natural epimorphism ${}^-:M_1\to M_1/M_2$, and set $e_i:=\overline{\omega_i-\omega_1}$, for $i\in\{1,\ldots,n\}$.
Then $e_n=-e_2-e_3-\cdots -e_{n-1}$, and the elements $e_2,\ldots, e_{n-1}$ form an $F$-basis of $D^{(n-1,1)}$.

\smallskip

(b)\, Let $r\in\{0,\ldots,n-1\}$. By \cite[Proposition~2.2]{MZ}, there is an $F\mathfrak{S}_n$-isomorphism $S^{(n-r,1^r)}\cong \bigwedge^rS^{(n-1,1)}$. Moreover, if $r\leq n-2$ then,
in consequence of \cite{P}, 
$\Hd(\bigwedge^rS^{(n-1,1)})\cong \bigwedge^r \Hd(S^{(n-1,1)})\cong \bigwedge^r D^{(n-1,1)}=:D_r$
is simple.
Thus $D_r$ has $F$-basis
\begin{equation}\label{eqn B_r}
\mathcal{B}_r:=\{e_{i_1}\wedge e_{i_2}\wedge\cdots\wedge e_{i_r}: 2\leq i_1<i_2<\cdots < i_r\leq n-1\}\,.
\end{equation}
If $r\leq p-1$ then $\bigwedge^r D^{(n-1,1)}\cong D^{(n-r,1^r)}$. 
\end{noth}


\section{Symmetric Groups and $p$-Subgroups}\label{sec sym}

Throughout this section, let $n\in\NN$, and let $p$ be a prime number. Permutations in the symmetric group
$\mathfrak{S}_n$ will be composed from right to left, so that, for instance, we have $(1,2)(2,3)=(1,2,3)\in\mathfrak{S}_3$.

\begin{defi}\label{defi supp}
Given an element $\sigma\in\mathfrak{S}_n$, we call
$\supp(\sigma):=\{i\in\{1,\ldots,n\}: \sigma(i)\neq i\}$ 
the {\sl support
of $\sigma$}. If $H\leq\mathfrak{S}_n$ then we call $\supp(H):=\bigcup_{\sigma\in H}\supp(\sigma)$ the
{\sl support of $H$}.
\end{defi}

\begin{noth}{\bf Sylow subgroups of symmetric groups.}\label{noth sylow}
(a)\, Let $P_p$ be the cyclic group $\langle (1,2,\ldots,p)\rangle\leq \mathfrak{S}_p$ of order $p$. Let
further $P_1:=\{1\}$ and, for $d\geq 1$, we set 
$$P_{p^{d+1}}:=P_{p^d}\wr P_p:=\{(\sigma_1,\ldots,\sigma_p;\pi): \sigma_1,\ldots,\sigma_p\in P_{p^d},\, \pi\in P_p\}\,.$$
Recall that, for $d\geq 2$, the multiplication in $P_{p^{d}}$ is given by
$(\sigma_1,\ldots,\sigma_p;\pi)(\sigma_1',\ldots,\sigma_p';\pi')=(\sigma_1\sigma_{\pi^{-1}(1)}',\ldots,\sigma_p\sigma_{\pi^{-1}(p)}';\pi\pi')\,,$
for $(\sigma_1,\ldots,\sigma_p;\pi), \, (\sigma_1',\ldots,\sigma_p';\pi')\in P_{p^{d}}$. 

We shall always identify $P_{p^d}$ with a 
subgroup of $\mathfrak{S}_{p^d}$ in the usual way. That is, $(\sigma_1,\ldots,\sigma_p;\pi)\in P_{p^d}$ is identified
with the element $\overline{(\sigma_1,\ldots,\sigma_p;\pi)}\in\mathfrak{S}_{p^d}$ that is defined as follows: if $j\in\{1,\ldots,p^d\}$
is such that $j=p^{d-1}(a-1)+b$, for some $a\in\{1,\ldots,p\}$ and some $b\in\{1,\ldots,p^{d-1}\}$ then
$\overline{(\sigma_1,\ldots,\sigma_p;\pi)}(j):=p^{d-1}(\pi(a)-1)+\sigma_{\pi(a)}(b)$.
Via this identification, $P_{p^d}$ can be generated by the elements $g_1,\ldots, g_d\in\mathfrak{S}_{p^d}$, where
\begin{equation}\label{eqn g_j}
g_j:=\prod_{k=1}^{p^{j-1}}(k,k+p^{j-1},k+2p^{j-1},\ldots, k+(p-1)p^{j-1})\quad (1\leq j\leq d)\,.
\end{equation}
In particular, with this notation we have $P_p\leq P_{p^2}\leq \cdots \leq P_{p^{d-1}}\leq P_{p^d}$, and
 the base group of the wreath product $P_{p^{d-1}}\wr P_p$ has the form 
$\prod_{i=0}^{p-1} g_d^i\cdot P_{p^{d-1}} \cdot g_d^{-i}$.

\medskip

(b)\, Now let $n\in \NN$ be arbitrary, and consider the $p$-adic expansion $n=\sum_{i=0}^r n_ip^i$ of $n$, where
$0\leq n_i\leq p-1$ for $i\in\{0,\ldots, r\}$, and 
where we may suppose that $n_r\neq 0$. 
By \cite[4.1.22, 4.1.24]{JK}, the Sylow $p$-subgroups of $\mathfrak{S}_n$ are
isomorphic to the direct product $\prod_{i=0}^r(P_{p^i})^{n_i}$. For subsequent computations it will be useful to
fix a particular Sylow $p$-subgroup $P_n$ of $\mathfrak{S}_n$ as follows: for $i\in\{t\in\mathbb{N}\ |\ n_t\neq 0\}$ and 
$1\leq j_i\leq n_i$, let $k(j_i):=\sum_{l=0}^{i-1}n_lp^l+(j_i-1)p^i$ and
$$P_{p^i,j_i}:= (1,1+k(j_i))\cdots (p^i,p^i+k(j_i))\cdot P_{p^i}\cdot (1,1+k(j_i))\cdots (p^i,p^i+k(j_i))\,.$$ 
Now set 
$$P_n:=P_{p,1}\times\cdots\times P_{p,n_1}\times\cdots\times P_{p^r,1}\times\cdots\times P_{p^r,n_r}\,.$$
Given this convention, we shall then also write $P_n=\prod_{i=0}^r(P_{p^i})^{n_i}$, for simplicity.
\end{noth}

\begin{expl}\label{expl sylow}
Suppose that $p=3$. Then $P_3=\langle g_1\rangle$, $P_9=\langle g_1,g_2\rangle$
and $P_{27}=\langle g_1,g_2,g_3\rangle$, where
\begin{align*}
g_1&=(1,2,3)\,,\\
g_2&=(1,4,7)(2,5,8)(3,6,9)\,,\\
g_3&=(1,10,19)(2,11,20)(3,12,21)(4,13,22)(5,14,23)(6,15,24)(7,16,25)(8,17,26)(9,18,27)\,.
\end{align*}
Moreover, $P_{51}=P_3\times P_3\times P_9\times P_9\times P_{27}$.
\end{expl}

\begin{noth}{\bf Elementary abelian groups.}\label{noth E_n}
(a)\, Suppose again that $n=p^d$, for some $d\in\NN$. 
We shall denote by $E_n$ the following elementary abelian subgroup of $P_n$ that
acts regularly on $\{1,\ldots,n\}$: let $g_1,\ldots,g_d$ be the
generators of $P_n$ fixed in (\ref{eqn g_j}). For $j\in\{1,\ldots,d-1\}$, let $g_{j,j+1}:=\prod_{i=0}^{p-1} g_{j+1}^i g_j g_{j+1}^{-i}$,
and for $l\in\{1,\ldots,d-j-1\}$, we inductively set 
$$g_{j,j+1,\ldots,j+l+1}:=\prod_{i=0}^{p-1} g_{j+l+1}^i \cdot g_{j,j+1,\ldots,j+l} \cdot g_{j+l+1}^{-i}\,.$$
Then $E_n:=\langle g_{1,\ldots,d}, g_{2,\ldots, d},\ldots, g_{d-1,d}, g_d\rangle$, and $|E_n|=n=p^d$.
 
 \smallskip

(b)\, Let $n\in\NN$ be arbitrary with $p\mid n$, and let $t,m_1,\ldots,m_t\in\NN_0$ be such that $n=\sum_{i=1}^t m_ip^i$.
For $i\in\{s\in\mathbb{N}\ |\ m_s\neq 0\}$ and $1\leq j_i\leq m_i$,
we set $k(j_i):=\sum_{l=0}^{i-1}m_lp^l+(j_i-1)p^i$ and 
$$E_{p^i,j_i}:=(1,1+k(j_i))\cdots (p^i,p^i+k(j_i))\cdot E_{p^i}\cdot (1,1+k(j_i))\cdots (p^i,p^i+k(j_i))\,.$$
Then $E(m_1,\ldots,m_t)\leq \mathfrak{S}_n$ denotes the elementary abelian group 
$$E_{p,1}\times\cdots \times E_{p,m_1}\times \cdots \times E_{p^t,1}\times\cdots \times E_{p^t,m_t}\,.$$

We emphasize that, unlike in \ref{noth sylow}, the integers $m_1,\ldots,m_t$ need not be less than $p$.
\end{noth}

\begin{expl}\label{expl E_n}
Suppose that $p=3$ and $n=27$. Then $E_n=E_{27}$ is generated by the elements
\begin{align*}
g_{1,2,3}&=(1,2,3)(4,5,6)(7,8,9)(10,11,12)(13,14,15)(16,17,18)(19,20,21)(22,23,24)(25,26,27)\,,\\
g_{2,3}&=(1,4,7)(2,5,8)(3,6,9)(10,13,16)(11,14,17)(12,15,18)(19,22,25)(20,23,26)(21,24,27)\,,\\
g_3&=(1,10,19)(2,11,20)(3,12,21)(4,13,22)(5,14,23)(6,15,24)(7,16,25)(8,17,26)(9,18,27)\,.
\end{align*}
\end{expl}

We recall the following lemma from \cite{D}, which will be
useful for our subsequent considerations.

\begin{lemma}[\protect{\cite[Lemma~2.1, Remark~2.2]{D}}]\label{lemma D 2.1}
Let $n\in\NN$ with $p$-adic expansion $n=\sum_{i=0}^r n_i p^i$, as in \ref{noth sylow}. 
Let $P\leq P_n$ be such
that $P=_{\mathfrak{S}_n} P_{p^i}$, for some $i\in\{1,\ldots,r\}$. Then $P\leq P_{p^l,j_l}$, for some $l\in\{i,\ldots,r\}$ and
some $1\leq j_l\leq n_l$. Moreover,
$P_{p^l,j_l}$ has precisely $p^{l-i}$ subgroups that are $\mathfrak{S}_n$-conjugate to $P_{p^i}$, and these 
are pairwise $P_{p^l,j_l}$-conjugate to each other.
\end{lemma}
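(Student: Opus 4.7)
My plan is to handle the two assertions of the lemma in turn. For the first, I would exploit the fact that $P$ is $\mathfrak{S}_n$-conjugate to $P_{p^i}$, so $P$ acts transitively on some $p^i$-element subset $\Omega\subseteq\{1,\ldots,n\}$ and fixes every other point. Since $P_n$ is an internal direct product of the $P_{p^l,j_l}$ with pairwise disjoint supports, every $P$-orbit lies entirely inside the support of one of these factors, forcing $\Omega\subseteq\supp(P_{p^l,j_l})$ for a unique pair $(l,j_l)$. Writing any $\sigma\in P$ uniquely as a commuting product $\sigma=\prod \sigma_{l',j_{l'}}$ with $\sigma_{l',j_{l'}}\in P_{p^{l'},j_{l'}}$, one has $\supp(\sigma_{l',j_{l'}})\subseteq\supp(\sigma)\subseteq\Omega$, and the disjointness of the supports of the $P_{p^{l'},j_{l'}}$ then forces $\sigma_{l',j_{l'}}=1$ whenever $(l',j_{l'})\neq(l,j_l)$. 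Thus $P\leq P_{p^l,j_l}$, and $l\geq i$ is immediate from $|P_{p^i}|\leq|P_{p^l}|$.

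For the second assertion, I would replace $P_{p^l,j_l}$ by the standard group $P_{p^l}$, which is permissible since the two are $\mathfrak{S}_n$-conjugate, and count subgroups $Q\leq P_{p^l}$ with $Q=_{\mathfrak{S}_{p^l}}P_{p^i}$ (one verifies easily that $\mathfrak{S}_n$- and $\mathfrak{S}_{p^l}$-conjugacy agree for subgroups of $P_{p^l}$ of this order). The proof is by induction on $l-i$. The base case $l=i$ is trivial, since $P_{p^l}$ is a Sylow $p$-subgroup of $\mathfrak{S}_{p^l}$ and hence the only $\mathfrak{S}_{p^l}$-conjugate of $P_{p^l}$ that it contains. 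For the inductive step I would use the wreath-product decomposition $P_{p^l}=P_{p^{l-1}}\wr P_p$ with base group $B=\prod_{k=0}^{p-1}g_l^k P_{p^{l-1}}g_l^{-k}$, whose $p$ direct factors act on disjoint blocks of size $p^{l-1}$.

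The heart of the induction is the claim that any such $Q$ must lie in $B$. Indeed, if the projection of $Q$ to the top copy of $P_p$ were non-trivial, then by primality it would be all of $P_p$, and some $q\in Q$ would permute the $p$ blocks in a full $p$-cycle. Such a $q$ has no fixed points on $\{1,\ldots,p^l\}$, contradicting the fact that $Q$ fixes $p^l-p^i>0$ points. Once $Q\leq B$, the orbit argument from the first assertion (now applied to the internal direct product $B$) places $Q$ inside a single factor $g_l^k P_{p^{l-1}}g_l^{-k}$, and the induction hypothesis yields $p^{l-1-i}$ choices of $Q$ within each of the $p$ factors, for a total of $p\cdot p^{l-1-i}=p^{l-i}$. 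For the conjugacy claim, $\langle g_l\rangle\leq P_{p^l}$ permutes the $p$ factors cyclically, and the induction hypothesis gives $P_{p^{l-1}}$-conjugacy of the subgroups inside a single factor; together these yield $P_{p^l}$-conjugacy of all $p^{l-i}$ subgroups.

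I expect the main technical subtlety to be the verification that any element of $P_{p^l}$ with non-trivial top part in the wreath-product decomposition acts without fixed points on $\{1,\ldots,p^l\}$. This is what actually pins $Q$ inside the base group $B$ and activates the nested single-factor argument; everything else is a fairly mechanical combination of support/orbit considerations with the recursive structure of the iterated wreath product.
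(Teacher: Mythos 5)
Your argument is correct: the orbit/support argument pins $P$ inside a single factor $P_{p^l,j_l}$, and the induction on $l-i$ via the wreath-product decomposition $P_{p^l}=P_{p^{l-1}}\wr P_p$ (using that an element with non-trivial top part is fixed-point-free to force $Q$ into the base group) gives both the count $p^{l-i}$ and the pairwise $P_{p^l}$-conjugacy. The paper itself gives no proof, citing \cite[Lemma~2.1, Remark~2.2]{D} instead, and the cited argument proceeds by the same induction on $l-i$ (as the paper indicates in Remark~\ref{rem E_n P_n}(b)), so your approach is essentially the standard one.
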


\begin{rem}\label{rem E_n P_n}
Let again $n\in\NN$ with $p$-adic expansion $n=\sum_{i=0}^r n_i p^i$.

\smallskip

(a)\, Let $P\leq P_n$ be such
that $P=_{\mathfrak{S}_n} P_{p^i}$, for some $i\in\{1,\ldots,r\}$, so that
$P\leq P_{p^l,j_l}$, for some $l\in\{i,\ldots,r\}$ and
some $1\leq j_l\leq n_l$, by Lemma~\ref{lemma D 2.1}.
Note that the  subgroups of $P_{p^l,j_l}$ that are $\mathfrak{S}_n$-conjugate to $P_{p^i}$ are uniquely determined by their
supports. In particular, if $i=1$ then $P$ is generated by one of the $p$-cycles
$(1,\ldots,p),\ldots, (n-n_0-p+1,\ldots,n-n_0)\in P_n$.

\smallskip

(b)\, Suppose that $E\leq P_n$ is such that $E=_{\mathfrak{S}_n} E_{p^i}$, for some $i\in\{1,\ldots,r\}$. 
Since $E$ has precisely one non-trivial orbit, we then also get $E\leq  P_{p^l,j_l}$, for some $l\in\{i,\ldots,r\}$ and
some $1\leq j_l\leq n_l$. Moreover, arguing by induction on $l-i$ as in the proof of \cite[Lemma 2.1]{D}, we 
deduce that $E$ then has to be contained in one of the $p^{l-i}$ subgroups of $P_{p^l,j_l}$
that are $\mathfrak{S}_n$-conjugate to $P_{p^i}$.
\end{rem}

\begin{lemma}\label{lemma E_n}
Let $n,d\in\NN$, and let $P\leq P_{p^d}\leq \mathfrak{S}_n$. Suppose that
$P$ contains an $\mathfrak{S}_n$-conjugate of $P_{p^{d-1}}$.
Suppose further that $P$ contains an elementary abelian group $E$ of order $p^d$ acting regularly on $\{1,\ldots,p^d\}$. Then
$P=P_{p^d}$.
\end{lemma}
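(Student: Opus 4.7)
The strategy is to exploit the wreath-product description $P_{p^d}=P_{p^{d-1}}\wr P_p$ from \ref{noth sylow}(a), whose base group $B:=\prod_{i=0}^{p-1}g_d^iP_{p^{d-1}}g_d^{-i}$ preserves the partition of $\{1,\ldots,p^d\}$ into its $p$ natural orbits of size $p^{d-1}$, with quotient $P_{p^d}/B\cong P_p$. I would show that $P$ contains the entire base group $B$ and at the same time surjects onto $P_{p^d}/B$; these two facts together will force $P=P_{p^d}$.

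First, I would normalise $Q$. By Lemma~\ref{lemma D 2.1} applied inside $\mathfrak{S}_{p^d}$, together with Remark~\ref{rem E_n P_n}(a), the $p$ subgroups of $P_{p^d}$ that are $\mathfrak{S}_n$-conjugate to $P_{p^{d-1}}$ are precisely the $p$ factors $g_d^iP_{p^{d-1}}g_d^{-i}$ of $B$, they are distinguished by their supports, and they form a single $P_{p^d}$-conjugacy class. Since every hypothesis and the conclusion of the lemma are preserved under conjugation of $P$ by an element of $P_{p^d}$, I may replace $P$ by a suitable $P_{p^d}$-conjugate and assume from the outset that $Q=P_{p^{d-1}}\leq B$ is the first of these factors, with $\supp(Q)=\{1,\ldots,p^{d-1}\}$.

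Second, I would extract from $E$ an element that cycles the $p$ orbits of $B$. Since $E$ acts regularly, hence transitively, on $\{1,\ldots,p^d\}$, its induced action on the set of $p$ orbits of $B$ is transitive; this induced action factors through $P_{p^d}/B\cong P_p$, so the image of $E$ in $P_{p^d}/B$ is a transitive, hence full, subgroup of $P_p$. Consequently there exists $\sigma\in E\leq P$ whose image in $P_{p^d}/B$ generates $P_p$, and such a $\sigma$ permutes the $p$ orbits of $B$ as a single $p$-cycle.

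Finally, I would assemble the conclusion. For each $i\in\{0,\ldots,p-1\}$ the subgroup $\sigma^iQ\sigma^{-i}\leq P$ is $\mathfrak{S}_n$-conjugate to $P_{p^{d-1}}$, is contained in $P_{p^d}$, and has support $\sigma^i(\supp(Q))$. As $\sigma$ cycles the $p$ orbits of $B$, these supports run through all $p$ of them as $i$ varies, so by the support-characterisation in Remark~\ref{rem E_n P_n}(a) the subgroups $\sigma^iQ\sigma^{-i}$ exhaust the $p$ factors of $B$. Hence $B\leq P$, and combined with $\langle\sigma\rangle\cdot B=P_{p^d}$ this gives $P=P_{p^d}$. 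The only step requiring genuine care is the support-based identification of each $\sigma^iQ\sigma^{-i}$ with a factor of $B$: one must confirm that this conjugate still lies in $P_{p^d}$ (which is immediate from $E\leq P_{p^d}$) so that Remark~\ref{rem E_n P_n}(a) applies, and match the $p$-cycle on orbits induced by $\sigma$ with the indexing of the base-group factors. Once this bookkeeping is in place, the rest is formal.
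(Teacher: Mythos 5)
Your proposal is correct and follows essentially the same route as the paper's own proof: both use the block system $\Delta_1,\ldots,\Delta_p$ of the wreath product $P_{p^{d-1}}\wr P_p$, extract from the regular group $E$ an element whose induced action cycles the $p$ blocks, conjugate the given copy of $P_{p^{d-1}}$ by its powers to recover the whole base group $B$ inside $P$, and then conclude via $[P_{p^d}:B]=p$. The only (trivial) omission is the degenerate case $d=1$, where the wreath-product structure is unavailable and one simply notes $E=P_p$; the paper dispatches this in one line.
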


\begin{proof}
If $d=1$ then $P_{p^d}=P_p=E$. From now on we may suppose that $d\geq 2$.
Recall that $P_{p^d}$ is generated by the elements $g_1,\ldots,g_d$ introduced in (\ref{eqn g_j}).
Moreover, $P_{p^d}$ acts imprimitively on the set $\{1,\ldots,p^d\}$, a system of imprimitivity being given
by $\Delta:=\{\Delta_1,\ldots,\Delta_p\}$, where $\Delta_s:=\{(s-1)p^{d-1}+1,\ldots,sp^{d-1}\}$, for $s\in\{1,\ldots,p\}$.
Since $E$ acts transitively on $\{1,\ldots,p^d\}$, there is some $g\in E$ such that $g(1)=p^{d-1}+1$; in particular, $g\cdot \Delta_1=\Delta_2$. Since
$p^{d-1}+1\neq 1$, we have $g\neq 1$, hence $g$ is an element of order $p$.  
Moreover, the group $\langle g\rangle$ acts on $\Delta$, so that we obtain a group homomorphism
$\varphi:\langle g\rangle\to \mathfrak{S}(\Delta)\cong \mathfrak{S}_p$. Since $g\cdot \Delta_1=\Delta_2\neq \Delta_1$, $\varphi$
must be injective. Thus $\varphi(g)$ has order $p$, implying
$g\cdot \Delta_1=\Delta_2$, $g\cdot \Delta_2=\Delta_{i_3},\ldots, g\cdot \Delta_{i_p}=\Delta_1$, for
$\{1,2,i_3,\ldots,i_p\}=\{1,\ldots,p\}$.

Let $R:={}^{\sigma}P_{p^{d-1}}\leq P$, for some $\sigma\in \mathfrak{S}_n$.
By Lemma~\ref{lemma D 2.1}, we know that $R=g_d^i P_{p^{d-1}} g_d^{-i}$, for some $i\in\{0,\ldots,p-1\}$. Thus
$\supp(R)=\Delta_{i+1}$. So, for $s\in\{0,\ldots, p-1\}$, the group ${}^{g^s}R$
has support $g^s\cdot \Delta_{i+1}$. As we have just seen, the sets $\Delta_{i+1}, g\cdot \Delta_{i+1},\ldots, g^{p-1}\cdot\Delta_{i+1}$
are pairwise disjoint.
Consequently, the groups $R$, ${}^gR,\ldots,{}^{g^{p-1}}R$ are precisely the different subgroups
of $P_{p^d}$ that are $P_{p^d}$-conjugate to $P_{p^{d-1}}$, $B:=\prod_{s=0}^{p-1}{}^{g^s}R$ is the base group 
of $P_{p^d}$, and is contained in $P$. Clearly $g\notin B$, since $g(1)\notin \Delta_1$.
Since $[P_{p^d}:B]=p$, this implies $P_{p^d}=\langle B, g\rangle\leq P\leq P_{p^d}$, and the proof is complete.
\end{proof}

\begin{lemma}\label{lemma max in E}
Let $n,t\in\NN$ and let $m_1,\ldots,m_t\in\NN_0$ be such that $m_t\neq 0$ and $n=\sum_{i=1}^tm_i p^i$.  
Suppose that $m_1=1$ and
$t\geq 2$. Let $P$ be a maximal subgroup of $E(m_1,\ldots,m_t)$ such that $E_{p,1}\not\leq P$. Then
$P$ contains a subgroup $Q\leq \prod_{i=2}^t \prod_{j=1}^{m_i} E_{p^i,j}$ that acts fixed point freely
on $\{p+1,\ldots,n\}$.
\end{lemma}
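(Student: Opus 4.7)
The plan is to treat $E := E(m_1,\ldots,m_t)$ as an $\mathbb{F}_p$-vector space and to write $E = A \oplus B$, where $A := E_{p,1}$ and $B := \prod_{i=2}^t \prod_{j=1}^{m_i} E_{p^i,j}$, each summand $E_{p^i,j}$ being a subspace of $\mathbb{F}_p$-dimension $i$. Maximal subgroups of the elementary abelian group $E$ are precisely the kernels of nonzero linear forms $\lambda : E \to \mathbb{F}_p$, and the hypothesis $E_{p,1}\not\leq P$ is equivalent to $\lambda|_A \not\equiv 0$.

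The key reduction is that it suffices to exhibit a single element $q = \sum_{(i,j)} v_{i,j} \in P \cap B$ (with $v_{i,j} \in E_{p^i,j}$) such that every $v_{i,j}$ is nonzero. Indeed, because the supports of the various $E_{p^i,j}$ partition $\{p+1,\ldots,n\}$ and each $E_{p^i,j}$ acts regularly on its own support, such a $q$ has no fixed points in $\{p+1,\ldots,n\}$. Since $q$ has order $p$, every nonidentity element $q^k$ of $\langle q \rangle$ has the same support as $q$, so $Q := \langle q \rangle$ is a subgroup of $P \cap B$ that acts fixed point freely on $\{p+1,\ldots,n\}$, as the lemma requires.

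To produce $q$, put $\mu := \lambda|_B$ and decompose $\mu = \sum_{(i,j)} \mu_{i,j}$ with $\mu_{i,j} : E_{p^i,j} \to \mathbb{F}_p$; the condition $q \in P$ becomes $\sum_{(i,j)} \mu_{i,j}(v_{i,j}) = 0$. The crucial input is that $\dim E_{p^i,j} = i \geq 2$: if $\mu_{i,j} = 0$ then every nonzero vector of $E_{p^i,j}$ lies in the zero fibre; if $\mu_{i,j} \neq 0$, then $\ker\mu_{i,j}$ has dimension $\geq 1$ and thus contains a nonzero vector, while for any $c \in \mathbb{F}_p^\times$ the affine fibre $\mu_{i,j}^{-1}(c)$ is nonempty and disjoint from the origin.

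A short case split then finishes. If at most one $\mu_{i_0,j_0}$ is nonzero, I take each $v_{i,j}$ to be any nonzero element of $\ker\mu_{i,j}$, making the sum automatically zero. Otherwise, I fix two distinct coordinates $(i_1,j_1), (i_2,j_2)$ where $\mu_{i_k,j_k} \neq 0$, choose nonzero $v_{i,j} \in \ker\mu_{i,j}$ on all other coordinates, and pick nonzero $v_{i_1,j_1} \in \mu_{i_1,j_1}^{-1}(1)$ and $v_{i_2,j_2} \in \mu_{i_2,j_2}^{-1}(-1)$ to make the sum vanish. I expect the only mild obstacle to be this bookkeeping; conceptually the argument reduces to the fact that an $\mathbb{F}_p$-space of dimension $\geq 2$ contains nonzero vectors in each prescribed (achievable) fibre of any linear form.
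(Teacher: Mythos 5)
Your proof is correct, but it takes a genuinely different route from the paper's. The paper decomposes $E=E_p\times E'$ with $E'=\prod_{i=2}^t\prod_{j=1}^{m_i}E_{p^i,j}$, invokes Goursat's Lemma to classify $P$ by its sections over $E_p$ and $E'$, and in the only nontrivial case takes $Q$ to be $\{1\}\times K_2$ where $K_2=\{h\in E':(1,h)\in P\}$ is a \emph{maximal} subgroup of $E'$; fixed-point-freeness is then forced by an index count (a fixed point in $\supp(E_{p^i,j})$ would make $K_2$ trivial on that whole factor, giving $[E':K_2]\geq p^i\geq p^2$). You instead view $P$ as the kernel of a nonzero linear form on $E$ over $\mathbb{F}_p$ and explicitly build a single element $q\in P\cap E'$ all of whose components are nonzero, taking $Q=\langle q\rangle$ cyclic of order $p$. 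Both arguments hinge on exactly the same numerical fact, namely $\dim_{\mathbb{F}_p}E_{p^i,j}=i\geq 2$ (for you: $\ker\mu_{i,j}\neq\{0\}$ and every nonzero fibre avoids the origin; for the paper: the index bound), and your fibre-balancing case split is airtight, including the degenerate cases where $\lambda|_{E'}$ vanishes on all or all but one factor. Two small observations: your argument never uses the hypothesis $E_{p,1}\not\leq P$, so you in fact prove the conclusion for \emph{every} maximal subgroup of $E$; and your output is stronger in a way that matters downstream, since the application in Lemma~\ref{lemma TrP} really wants a single permutation $\sigma(l,j_l)\in P$ with $\supp(E_{p^l,j_l})\subseteq\supp(\sigma(l,j_l))\subseteq\{p+1,\ldots,n\}$, and your one element $q$ serves simultaneously for all pairs $(l,j_l)$, whereas the paper must extract such elements from $K_2$ factor by factor.
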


\begin{proof}
For convenience, set $E':=\prod_{i=2}^t \prod_{j=1}^{m_i} E_{p^i,j}$, so that $E(m_1,\ldots,m_t)=E_p\times E'\geq P$.
By Goursat's Lemma, we may identify $P$ with the quintuple $(P_1,K_1,\eta,P_2,K_2)$, where
$P_1$ and $P_2$ are the projections of $P$ onto $E_p$ and onto $E'$, respectively, 
$K_1:=\{g\in E_p: (g,1)\in P\}\unlhd P_1$, $K_2:=\{h\in E': (1,h)\in P\}$, and $\eta: P_2/K_2\to P_1/K_1$ is a 
group isomorphism. Since $|E_p|=p$, there are precisely three possibilities for the section $(P_1,K_1)$ of $E_p$:

\smallskip

(i)\, $P_1=K_1=E_p$,

(ii)\, $P_1=K_1=\{1\}$,

(iii)\, $P_1=E_p$ and $K_1=\{1\}$.

\noindent
Case (i) cannot occur, since we are assuming $E_p\not\leq P$. In case (ii) we get $P=E'$, so that the assertion then holds with
$Q:=P$. So suppose that $P_1=E_p$ and $K_1=\{1\}$, so that also $[P_2:K_2]=p$.
Next recall that $P/(K_1\times K_2)\cong P_1/K_1\cong P_2/K_2$; see, for instance, \cite[2.3.21]{Bouc}.
This forces
$|E'|=|P|=|K_2|\cdot |P_1|=|K_1|\cdot |P_2|=|P_2|$. Thus $P_2=E'$, and $K_2$ is a maximal subgroup of $E'$.
Assume that $K_2$ has a fixed point $x$ on $\{p+1,\ldots,n\}$. Then $x\in\supp(E_{p^i,j})$,
 for some $i\geq 2$ with $m_i\neq 0$ and  some $j\in\{1,\ldots,m_i\}$.
But then $K_2$ has to fix the entire support 
of $E_{p^i,j}$, since $E_{p^i,j}$ acts regularly on its support.
This implies $[P_2:K_2]\geq p^i\geq p^2$, a contradiction.
Consequently, $K_2$ must act fixed point freely on $\{p+1,\ldots,n\}$, and the assertion of the lemma follows 
with $Q:=\{1\}\times K_2\leq P$.
\end{proof}

The next result will be one of the key ingredients of our proof of Theorem~\ref{thm main} in Section~\ref{sec proof} below.

\begin{prop}\label{prop P and E in Q}
Let $n\in\NN$ with $p$-adic expansion $n=p+\sum_{i=2}^rn_ip^i$, where $r\geq 2$ and $n_r\neq 0$. 
Let $Q\leq P_n$ be such that $P_{n-2p}\leq_{\mathfrak{S}_n} Q$ and $E(1,n_2,\ldots,n_r)\leq_{\mathfrak{S}_n} Q$.
Then $Q=P_n$.
\end{prop}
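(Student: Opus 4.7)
The plan is to show that $Q \cap P_{p^l, k} = P_{p^l, k}$ for every direct factor $P_{p^l, k}$ of $P_n = \prod_{l,k} P_{p^l, k}$, which then yields $Q \supseteq \prod P_{p^l, k} = P_n$. Lemma~\ref{lemma E_n} applied to $Q \cap P_{p^l, k}$ for each $l \geq 2$ is the main engine, supplied with the required ingredients coming from the given conjugates of $E(1, n_2, \ldots, n_r)$ and $P_{n-2p}$ in $Q$.

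First, I would analyze the $\mathfrak{S}_n$-conjugate $E' \leq Q$ of $E(1, n_2, \ldots, n_r)$. Since $E'$ has no fixed points and its multiset of orbit sizes equals that of $P_n$ (one orbit of size $p$ together with $n_l$ orbits of size $p^l$ for each $l \geq 2$), a downward induction on $l$ from $r$, combined with the fact that $E'$ has only one orbit of size $p$, forces each $P_n$-orbit of size $p^l$ to coincide with a single $E'$-orbit. Consequently $E'$ decomposes as the internal direct product $E' = \prod_{(l,k)} E'_{l,k}$, where $E'_{l,k} := E' \cap P_{p^l, k}$ is elementary abelian of order $p^l$ acting regularly on the orbit $O_{l,k}$ of $P_{p^l, k}$. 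In particular $E'_{1,1} = P_p$, handling the $l = 1$ factor at once.

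Next, let $P'' = F_1 \times \cdots \times F_m \leq Q$ be the direct-factor decomposition of an $\mathfrak{S}_n$-conjugate of $P_{n-2p}$, so each $F_i$ is an $\mathfrak{S}_n$-conjugate of $P_{p^{j_i}}$ with $\supp(F_i)$ contained in some $P_n$-orbit of size at least $p^{j_i}$, and the multiplicities of the $j_i$ are the $p$-adic digits $(n-2p)_j$, each bounded by $p-1$. The crucial claim is: \emph{for each $(l,k)$ with $l \geq 2$, some $F_i$ has $\supp(F_i) \subseteq O_{l,k}$ with $j_i \geq l - 1$}. Granting this, if $j_i = l$ then Lemma~\ref{lemma D 2.1} forces $F_i = P_{p^l, k}$ directly; if $j_i = l - 1$, then $Q \cap P_{p^l, k}$ contains both $F_i$ (an $\mathfrak{S}_n$-conjugate of $P_{p^{l-1}}$) and $E'_{l,k}$, so Lemma~\ref{lemma E_n} yields $Q \cap P_{p^l, k} = P_{p^l, k}$. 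Taking the product over all $(l,k)$ completes the proof.

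The main obstacle is the claim. My approach is by contradiction: suppose every $F_i$ with $\supp(F_i) \subseteq O_{l,k}$ has $j_i \leq l - 2$. Since $P''$ has exactly $2p$ fixed points in $\{1, \ldots, n\}$, the total support of such small factors inside $O_{l,k}$ is at least $p^l - 2p$. On the other hand, the sum of sizes of \emph{all} $P''$-factors with $j \leq l - 2$ is at most $(p-1)(p + p^2 + \cdots + p^{l-2}) = p^{l-1} - p$, and this bounds the sum inside $O_{l,k}$ a fortiori. Combining the two yields $p^l - 2p \leq p^{l-1} - p$, i.e.\ $p^{l-2}(p-1) \leq 1$, which is absurd for $p \geq 3$ and $l \geq 2$.
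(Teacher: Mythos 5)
Your proof is correct, and it rests on the same two pillars as the paper's argument: Lemma~\ref{lemma D 2.1} to confine each factor of the conjugates of $E(1,n_2,\ldots,n_r)$ and of $P_{n-2p}$ to a single $P_n$-orbit, and Lemma~\ref{lemma E_n} to promote a regular elementary abelian subgroup together with a conjugate of $P_{p^{l-1}}$ to all of $P_{p^l,k}$. Where you differ is in the middle combinatorial step. The paper takes $s$ minimal with $n_s\neq 0$ and exploits the exact digit pattern of $n-2p$ (digits $p-1$ below $p^s$, digit $n_s-1$ at $p^s$) to show that every factor of the conjugate of $P_{n-2p}$ of size $p^i$ with $i>s$, and all but one of size $p^s$, must literally coincide with the corresponding $P_{p^i,j}$; only one exceptional orbit $O_{s,k}$ remains, and Lemma~\ref{lemma E_n} is invoked just once, for that factor. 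You instead prove the weaker but uniform claim that every orbit $O_{l,k}$ with $l\geq 2$ contains a factor of size at least $p^{l-1}$, via the inequality comparing the $p^l-2p$ points that must be covered with the bound $p^{l-1}-p$ on the total size of all small factors, and then treat every factor $P_{p^l,k}$ by Lemma~\ref{lemma E_n} (or, in the degenerate case $j_i=l$, by Lemma~\ref{lemma D 2.1} alone). Your counting argument is arguably more transparent and sidesteps the bookkeeping around the minimal nonzero digit; its only cost is that the final contradiction $p^{l-2}(p-1)\leq 1$ needs $p\geq 3$, which is harmless for the application in Section~\ref{sec proof} but is an extra hypothesis not visible in the statement as printed in Section~\ref{sec sym}. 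The supporting steps all check out: the downward induction matching $E'$-orbits to $P_n$-orbits, the observation that each $F_i$ is transitive on its support so that uncovered points of $O_{l,k}$ are genuine fixed points of the conjugate of $P_{n-2p}$, and the transfer of Lemma~\ref{lemma E_n} to the conjugate subgroup $P_{p^l,k}$.
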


\begin{proof}
Let $2\leq s\leq r$ be minimal such that $n_s\neq 0$. Then
$n-2p$ has $p$-adic expansion $n-2p=\sum_{j=1}^{s-1} (p-1)p^j+(n_s-1)p^s+\sum_{i=s+1}^rn_ip^i$. Moreover, we have
$$P_n=P_{p,1}\times\prod_{i=s}^r\prod_{j=1}^{n_i} P_{p^i,j}\quad \text{ and } \quad E_n=E(1,n_2,\ldots,n_r)=E_{p,1}\times\prod_{i=s}^r\prod_{j=1}^{n_i} E_{p^i,j}\,.$$
By our hypothesis, there is some $g\in\mathfrak{S}_n$ such that
${}^gE_{p,1}\times\prod_{i=s}^r\prod_{j=1}^{n_i} {}^gE_{p^i,j}\leq Q\leq P_n$.
In consequence of Lemma~\ref{lemma D 2.1} and Remark~\ref{rem E_n P_n}, 
we may suppose that ${}^gE_{p^i,j}\leq P_{p^i,j}$, for $i\geq 2$ and $1\leq j\leq n_i$, as well as ${}^gE_{p,1}=E_{p,1}=P_{p,1}$.
Since also $P_{n-2p}\leq_{\mathfrak{S}_n} Q$, there exists some $R\leq Q\leq P_n$ of the form
$$R=\prod_{i=1}^{s-1}\prod_{j=1}^{p-1}R_{p^i,j}\times \prod_{j=1}^{n_s-1}R_{p^s,j}\times\prod_{i=s+1}^r \prod_{j=1}^{n_i}R_{p^i,j}\,,$$
where $R_{p^k,l}=_{\mathfrak{S}_n} P_{p^k,l}$, for all possible $k$ and $l$. By Lemma~\ref{lemma D 2.1}
and Remark~\ref{rem E_n P_n} again, we must have 
$\prod_{i=s+1}^r \prod_{j=1}^{n_i}R_{p^i,j}=\prod_{i=s+1}^r \prod_{j=1}^{n_i}P_{p^i,j}\leq P_n$. As well, there is 
some $k\in\{1,\ldots,n_s\}$ and some $m\in\{1,\ldots,p-1\}$
such that
$ \prod_{j=1}^{n_s-1}R_{p^s,j}=\prod_{j=1}^{k-1}P_{p^s,j}\times \prod_{l=k+1}^{n_s} P_{p^s,l}\leq P_n$
and $R_{p^{s-1},m}\leq P_{p^s,k}$. 
By Lemma~\ref{lemma D 2.1}, $R_{p^{s-1},m}$ is thus $P_{p^s,k}$-conjugate
to one of the $p^{s-1}$ subgroups of $P_{p^s,k}$ that are $\mathfrak{S}_n$-conjugate to $P_{p^{s-1}}$.
Since $Q$ also contains the regular elementary abelian group ${}^gE_{p^s,k}\leq P_{p^s,k}$, Lemma~\ref{lemma E_n}
now implies that $P_{p^s,k}\leq Q$. Altogether this shows that indeed $P_n\leq Q$, and the assertion of the proposition follows.
\end{proof}


\section{The Proof of Theorem~\ref{thm main}}\label{sec proof}

The aim of this section is to establish a proof of Theorem~\ref{thm main}. 
To this end, let $F$ be a field of characteristic $p>2$, and let $n\in \NN$ be such that 
$n\equiv p\pmod{p^2}$.  
The simple $F\mathfrak{S}_n$-module $D^{(n-p+1,1^{p-1})}$
will henceforth be denoted by $D$. 
If $p=n$ then the Sylow $p$-subgroups of $\mathfrak{S}_n$ are abelian, and are thus the vertices of $D$, by Kn\"orr's Theorem \cite{Kn}.
From now on we shall suppose that $n\geq p^2+p$.
Let $P_n$ be the Sylow $p$-subgroup of $\mathfrak{S}_n$ introduced
in \ref{noth sylow}. In order to show that $P_n$ is a vertex of $D$, we shall proceed as follows: suppose that $Q\leq P_n$ is a 
vertex of $D$. Then:

\smallskip

(i)\, Building on Wildon's result in \cite[Theorem~2]{W}, it was shown in
\cite[Proposition~5.2]{D} that $P_{n-2p}=P_{n-(p-1)-2}\times P_{p-1}<_{\mathfrak{S}_n} Q$.

\smallskip

(ii)\, Let $n=\sum_{i=2}^rn_i p^i+p$ be the $p$-adic expansion of $n$, where $r\geq 2$ and $n_r\neq 0$. We shall show
in Proposition~\ref{prop Brauer D(E)} below that $D(E(1,n_2,\ldots,n_r))\neq \{0\}$. 
Here $E(1,n_2,\ldots,n_r)$ denotes the elementary abelian subgroup of $P_n$ defined in \ref{noth  E_n},
and $D(E(1,n_2,\ldots,n_r))$ denotes the Brauer construction of $D$ with respect to $E(1,n_2,\ldots,n_r)$
as defined in \ref{noth Brauer}.
Thus, $E(1,n_2,\ldots,n_r)\leq_{\mathfrak{S}_n} Q$, by \cite[(1.3)]{Br}.

\smallskip

(iii)\, Once we have verified (ii), we can apply Proposition~\ref{prop P and E in Q},  which will then show that $Q=P_n$.

\begin{nota}\label{nota e}
(a)\, Let $\mathcal{B}:=\mathcal{B}_{p-1}$ be the $F$-basis of $D$ defined in (\ref{eqn B_r}), and let $u\in D$ be such that
$u=\sum_{b\in\mathcal{B}}\lambda_b  b$, for $\lambda_b\in F$. The basis element $e_2\wedge e_3\w \cdots \w e_p\in\mathcal{B}$ will from now on be denoted by $e$.
Moreover, suppose that $k,x\in\{2,\ldots,n-1\}$ and that $k\leq p$. Then we denote the element 
$e_2\wedge \cdots \wedge e_{k-1}\wedge e_{k+1}\wedge\cdots \wedge e_p\wedge e_x$ of $D$
by $\he_k\w e_x$. In the case where $\he_k\w e_x\in\mathcal{B}$, the
coefficient $\lambda_{e_2\wedge \cdots \wedge e_{k-1}\wedge e_{k+1}\wedge\cdots \wedge e_p\wedge e_x}$ will be abbreviated by 
$\lambda_{\hk,x}$.

Similarly, if $2\leq k<l\leq p$ and if $x,y\in\{2,\ldots,n-1\}$, then we set 
$\he_{k,l}\w e_x\w e_y:=e_2\w\cdots\w e_{k-1}\w e_{k+1}\w\cdots \w e_{l-1}\w e_{l+1}\w\cdots \w e_p\w e_x\w e_y\in D$.
In the case where $\he_{k,l}\w e_x\w e_y\in\mathcal{B}$, we
denote by $\lambda_{\widehat{k,l},x,y}$ the coefficient
at $\he_{k,l}\w e_x\w e_y$ in $u$.

\smallskip

(b)\, Let $u\in D$ be such that $u=\sum_{b\in \mathcal{B}} \lambda_b b$, with $\lambda_b\in F$.
We say that the basis element $b\in\mathcal{B}$ {\sl occurs in $u$} with coefficient $\lambda_b$.

\smallskip

(c)\, For $k_1,k_2\in\{2,\ldots,n-1\}$, we set
\begin{equation}\label{eqn s(k)}
s(k_1,k_2):= \begin{cases}
k_2-(k_1-1) &\text{ if } k_1\leq k_2\,,\\
0&\text{ if } k_2< k_1\,.
\end{cases}
\end{equation}
Thus, if $k_1\leq k_2$ then 
$$s(k_1,k_2)\equiv \begin{cases}
0\pmod{ 2}&\text{ if } k_1\not\equiv k_2\pmod{2}\,,\\
1\pmod{2}&\text{ if } k_1\equiv k_2\pmod{2}\,.
\end{cases}$$

\smallskip

(d)\, From now on, let 
$t,m_2,\ldots,m_t\in \NN$ be such that $t\geq 2$, $m_t\neq 0$, and
$n=p+\sum_{i=2}^t m_i p^{i}$.
The elementary abelian group $E(1,m_2,\ldots,m_t)\leq \mathfrak{S}_n$ will be denoted by $E$. Note that, by our convention
in \ref{noth E_n}, we have $(1,2,\ldots,p)\in E$. In the case where $t=r$ and $m_i=n_i$, for $i=2,\ldots,r$, we, in particular, 
get $E=E(1,n_2,\ldots,n_r)$.
\end{nota}

In the course of this section we shall have to compute explicitly the actions of elements in $E$ on
our chosen basis $\mathcal{B}$ of $D$. 
The following lemmas will be used repeatedly in this section.

\begin{lemma}\label{lemma acts alpha}
Let $\alpha:=(1,2,\ldots,p)\in \mathfrak{S}_n$.
Let further $\beta:=(x_1,\ldots,x_p)\in\mathfrak{S}_n$ be such that $\{x_1,\ldots,x_p\}\cap \{1,\ldots,p\}=\emptyset$.

\smallskip

{\rm (a)}\, For $i\in\{2,\ldots,n-1\}$, one has
$$\alpha\cdot e_i=\begin{cases}
e_{i+1}-e_2&\text{ if } 2\leq i\leq p-1\,,\\
-e_2&\text{ if } i=p\,,\\
e_i-e_2&\text{ if } i\geq p+1\,.
\end{cases}$$

\smallskip

{\rm (b)}\, If $n\notin\supp(\beta)$ then, for $i\in\{2,\ldots,n-1\}$, one has
$$\beta\cdot e_i=\begin{cases}
e_i&\text{ if } i\notin\supp(\beta)\,,\\
e_{\beta(i)}&\text{ if } i\in\supp(\beta)\,.
\end{cases}$$

\smallskip

{\rm (c)}\, If $x_p=n$ then, for  $i\in\{2,\ldots,n-1\}$, one has
$$\beta\cdot e_i=\begin{cases}
e_i&\text{ if } i\notin\supp(\beta)\,,\\
e_{\beta(i)}&\text{ if } i\in\{x_1,\ldots,x_{p-2}\}\,,\\
-\sum_{j=2}^{n-1}e_j&\text{ if } i=x_{p-1}\,.
\end{cases}$$
\end{lemma}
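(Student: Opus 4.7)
The plan is to reduce all three parts to one direct computation and then simply read off the cases. For any $\sigma\in\mathfrak{S}_n$ the action on the natural module satisfies $\sigma\cdot\omega_j=\omega_{\sigma(j)}$, so from $e_i=\overline{\omega_i-\omega_1}$ one has
\[
\sigma\cdot e_i \;=\; \overline{\omega_{\sigma(i)}-\omega_{\sigma(1)}}
\]
in $D^{(n-1,1)}=M_1/M_2$. The rest is a matter of tracking the images of $1$ and $i$ and rewriting the result in the basis $e_2,\ldots,e_{n-1}$ using $e_1=0$ and, whenever $\sigma(i)=n$ occurs, the relation
\[
e_n \;=\; -e_2-e_3-\cdots-e_{n-1},
\]
which holds because $p\mid n$ forces $\sum_{j=1}^{n}\omega_j\in M_2$, hence $\sum_{j=2}^{n}e_j=0$ in $M_1/M_2$.

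For (a), $\alpha(1)=2$, so $\alpha\cdot e_i=\overline{\omega_{\alpha(i)}-\omega_2}$. A three-way case split on $i$ gives the formula: if $2\leq i\leq p-1$ then $\alpha(i)=i+1\in\{3,\ldots,p\}$, producing $e_{i+1}-e_2$; if $i=p$ then $\alpha(p)=1$, producing $\overline{\omega_1-\omega_2}=-e_2$; and if $i\geq p+1$ then $\alpha(i)=i$, producing $e_i-e_2$. For (b), the disjointness assumption on $\supp(\beta)$ and $\{1,\ldots,p\}$ implies $\beta(1)=1$, and when $i\in\supp(\beta)$ the image $\beta(i)$ still lies in $\supp(\beta)\subseteq\{p+1,\ldots,n-1\}$ because $n\notin\supp(\beta)$, so $e_{\beta(i)}\in\mathcal{B}$ and no rewriting is needed.

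Part (c) is the only case where any subtlety appears. Again $\beta(1)=1$, so for $i\notin\supp(\beta)$ and for $i\in\{x_1,\ldots,x_{p-2}\}$ the argument of (b) works verbatim (note that for such $i$ the image $\beta(i)$ lies in $\{x_2,\ldots,x_{p-1}\}\subseteq\{p+1,\ldots,n-1\}$). For $i=x_{p-1}$, however, $\beta(x_{p-1})=x_p=n$, so
\[
\beta\cdot e_{x_{p-1}} \;=\; \overline{\omega_n-\omega_1} \;=\; e_n,
\]
which is \emph{not} one of our basis vectors. Substituting the relation $e_n=-\sum_{j=2}^{n-1}e_j$ yields exactly the stated formula.

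The only place where the hypothesis $p\mid n$ (and thus the specific shape of the basis $\mathcal{B}$) enters is in rewriting $e_n$ at the end of (c); I expect that to be the ``main obstacle'' only in the sense that one must remember to perform the substitution before identifying coefficients in $\mathcal{B}$. Otherwise the lemma is a bookkeeping statement recording, once and for all, the action of the two types of $p$-cycles that generate the elementary abelian group $E$ in Notation~\ref{nota e}(d); these formulas will then be invoked repeatedly when computing Brauer constructions in the sequel.
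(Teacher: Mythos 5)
Your proof is correct and follows essentially the same route as the paper: the paper computes $\alpha\cdot e_i=\overline{\omega_{\alpha(i)}-\omega_{\alpha(1)}}$ explicitly for part (a) and leaves (b) and (c) to the reader as "similar", which is precisely the uniform computation $\sigma\cdot e_i=\overline{\omega_{\sigma(i)}-\omega_{\sigma(1)}}$ you carry out, including the substitution $e_n=-\sum_{j=2}^{n-1}e_j$ in case (c). Nothing to add.
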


\begin{proof}
{\rm (a)}\, If $2\leq i\leq p-1$, then 
\begin{align*}
\alpha \cdot e_i=&\alpha\cdot (\overline{\omega_i-\omega_1})=\overline{\alpha\cdot (\omega_i-\omega_1)}
=\overline{\omega_{\alpha(i)}-\omega_{\alpha(1)}}=\overline{\omega_{i+1}-\omega_2}
=\overline{(\omega_{i+1}-\omega_1)-(\omega_2-\omega_1)}\\
=& e_{i+1}-e_2.
\end{align*}
If $i=p$, then $\alpha \cdot e_i=\alpha\cdot (\overline{\omega_p-\omega_1})=\overline{\omega_1-\omega_2}=-e_2$.
Finally, if $i\geq p+1$, then we have 
$$\alpha \cdot e_i=\overline{\omega_i-\omega_2}=\overline{(\omega_i-\omega_1)-(\omega_2-\omega_1)}=e_i-e_2\,.$$

\smallskip

The proofs of {\rm (b)} and {\rm (c)} are similar, and are left to the reader. 
\end{proof}

\begin{lemma}\label{lemma s(k)}
Let $k,l\in\{2,\ldots,p\}$, and let $x\in\{p+1,\ldots,n-1\}$. Then one has

\smallskip


{\rm (a)}\, $e_{k+1}\w\cdots \w e_p\w e_2\w\cdots \w e_{k-1}\w e_x=(-1)^{s(k+1,p)(k-2)} \he_k\w e_x$;

\smallskip

{\rm (b)}\, $\he_k\w e_k=(-1)^{s(k+1,p)}  e$;

\smallskip

{\rm (c)}\, if $k<l$ then $\he_{k,l}\w e_x\w e_l= (-1)^{s(l+1,p)+1}  \he_k\w e_x$;

\smallskip

{\rm (d)}\, if $k<l$ then $\he_{k,l}\w e_x\w e_k=(-1)^{s(k+1,p)} \he_l\w e_x$.
\end{lemma}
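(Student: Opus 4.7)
The plan is to prove all four identities by careful bookkeeping of the sign changes that arise when reordering factors in an exterior product, using only the anti-commutativity relation $v\wedge w=-w\wedge v$. The key numerical observation is that, under the definition of $s$ in \eqref{eqn s(k)}, one has $s(k+1,p)=p-k$ whenever $k\in\{2,\ldots,p-1\}$ (and $s(p+1,p)=0$), which is exactly the number of factors $e_{k+1},\ldots,e_p$ that one has to move past a given element. So each $(-1)^{s(k+1,p)}$ appearing in the statements corresponds to sliding some factor through the block $e_{k+1},\ldots,e_p$.

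For part (a), I would compare the left-hand side with $\he_k\w e_x$: both are wedge products of the same set of basis vectors $\{e_2,\ldots,e_{k-1},e_{k+1},\ldots,e_p,e_x\}$, but in the left-hand side the block $e_{k+1},\ldots,e_p$ of length $p-k=s(k+1,p)$ sits to the left of the block $e_2,\ldots,e_{k-1}$ of length $k-2$. Moving the first block past the second requires $(p-k)(k-2)=s(k+1,p)(k-2)$ transpositions, which produces the desired sign. Part (b) is obtained in the same way, by moving $e_k$ leftwards across the $p-k$ factors $e_{k+1},\ldots,e_p$ until it is restored between $e_{k-1}$ and $e_{k+1}$, recovering $e=e_2\w\cdots\w e_p$ with sign $(-1)^{p-k}=(-1)^{s(k+1,p)}$.

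For (c), assuming $k<l$, I first swap $e_x$ and $e_l$ at the cost of a sign, and then slide $e_l$ leftwards past the $p-l=s(l+1,p)$ factors $e_{l+1},\ldots,e_p$ to reinsert it in its natural position. The combined sign is $(-1)^{1+s(l+1,p)}$, and the resulting vector is precisely $\he_k\w e_x$. Part (d) proceeds analogously: swap $e_x$ and $e_k$, then slide $e_k$ leftwards through the two blocks $e_{k+1},\ldots,e_{l-1}$ and $e_{l+1},\ldots,e_p$, totalling $(l-1-k)+(p-l)=p-k-1$ transpositions. Together with the initial swap this yields a sign $(-1)^{p-k}=(-1)^{s(k+1,p)}$, and the result of reinsertion is $\he_l\w e_x$.

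There is no genuine obstacle here; the whole lemma is a bookkeeping exercise about adjacent transpositions in the exterior algebra, and the only care required is to identify each of the two distances $p-k$, $p-l$, $p-k-1$ with the correct expression of the form $s(\cdot,p)$ (or $s(\cdot,p)+$ constant) that appears in the statement. I would present the four parts in the order (a), (b), (c), (d), since each amounts to the same underlying counting argument, and verify the sign bookkeeping once at the start rather than repeating it for each case.
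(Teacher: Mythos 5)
Your proof is correct and follows essentially the same route as the paper, which likewise proves (a) by counting the adjacent transpositions needed to slide the block $e_{k+1}\w\cdots\w e_p$ (of length $s(k+1,p)=p-k$) past $e_2\w\cdots\w e_{k-1}$ and leaves (b)--(d) to the reader as analogous sign bookkeeping. Your explicit verification of the counts $p-k$, $p-l$ and $p-k-1$ for parts (b)--(d) matches what the paper intends.
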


\begin{proof}
{\rm (a)}\, For $k\in\{2,\ldots,p\}$ and $x\in\{p+1,\ldots,n-1\}$, we have
\begin{align*}
&\overbrace{e_{k+1}\w\cdots \w e_p}^{s(k+1,p)}\w \underbrace{e_2\w\cdots \w e_{k-1}}_{k-2}\w e_x\\
&=(-1)^{s(k+1,p)} e_2\w e_{k+1}\w\cdots \w e_p\w e_3\w\cdots \w e_{k-1}\w e_x=(-1)^{s(k+1,p)(k-2)} \he_k\w e_x.
\end{align*}

The proofs of {\rm (b)}, {\rm (c)} and {\rm (d)} are similar, and are left to the reader. 
\end{proof}

\begin{cor}\label{cor e}
For $e:=e_2\wedge e_3\w \cdots \w e_p$, we have $e\in D^{P_n}$; in particular, $e\in D^P$, for every $P\leq P_n$.
\end{cor}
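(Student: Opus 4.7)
The plan is to use the direct product decomposition of $P_n$ given in \ref{noth sylow}, and to verify that $e$ is fixed by each of the natural factors.

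Since $n \equiv p \pmod{p^2}$, the $p$-adic expansion of $n$ has $n_0 = 0$ and $n_1 = 1$, so by \ref{noth sylow}(b) we have
$$P_n = P_{p,1} \times \prod_{i=2}^{r}\prod_{j=1}^{n_i} P_{p^i,j}\,,$$
where $P_{p,1} = \langle \alpha \rangle$ with $\alpha = (1,2,\ldots,p)$, and each factor $P_{p^i,j}$ with $i \geq 2$ has support contained in $\{p+1,\ldots,n\}$. It suffices to show that $e$ is fixed by $\alpha$ and by every $\sigma \in P_{p^i,j}$ for $i \geq 2$.

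For the second point, any such $\sigma$ satisfies $\sigma(k) = k$ for $k \in \{1,\ldots,p\}$, so $\sigma \cdot e_k = \overline{\omega_{\sigma(k)} - \omega_{\sigma(1)}} = e_k$ for each $k \in \{2,\ldots,p\}$. Hence $\sigma \cdot e = e$. Notice that we do \emph{not} need to worry about how $\sigma$ moves $n$, because $e_n$ does not appear in $e$.

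For the action of $\alpha$, I would apply Lemma~\ref{lemma acts alpha}(a) to compute
$$\alpha \cdot e = (e_3 - e_2) \wedge (e_4 - e_2) \wedge \cdots \wedge (e_p - e_2) \wedge (-e_2)\,.$$
Expanding the wedge, any summand in which $-e_2$ is chosen from one of the first $p-2$ factors contains $e_2$ twice (once more from the last factor) and hence vanishes. The only surviving term is
$$\alpha \cdot e = -\, e_3 \wedge e_4 \wedge \cdots \wedge e_p \wedge e_2\,.$$
Cycling $e_2$ to the front across $p-2$ factors contributes a sign $(-1)^{p-2}$, which equals $-1$ since $p$ is odd. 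Therefore $\alpha \cdot e = -(-1)^{p-2}\, e = e$.

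Combining the two cases yields $\sigma \cdot e = e$ for every generator of $P_n$, so $e \in D^{P_n}$, and a fortiori $e \in D^P$ for every $P \leq P_n$. The only mildly delicate step is keeping track of the sign arising from the cyclic reordering in the $\alpha$-computation, but this is automatic from the oddness of $p$.
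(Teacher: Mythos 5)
Your proposal is correct and follows essentially the same route as the paper: decompose $P_n$ into $P_{p,1}=\langle\alpha\rangle$ times the factors supported on $\{p+1,\ldots,n\}$, observe the latter fix $e$ trivially, and compute $\alpha\cdot e$ via Lemma~\ref{lemma acts alpha}(a). The only cosmetic difference is that you carry out the sign bookkeeping for $e_3\wedge\cdots\wedge e_p\wedge e_2$ by hand, where the paper invokes Lemma~\ref{lemma s(k)}(b); both give $(-1)^{p-2}=-1$ and hence $\alpha\cdot e=e$.
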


\begin{proof}
With the notation in \ref{noth sylow} we have $P_n=P_p\times\prod_{i=2}^r(P_{p^i})^{n_i}$, and $P_p=\langle \alpha\rangle$,
where $\alpha:=(1,2,\ldots,p)$.
If $\beta\in \prod_{i=2}^r(P_{p^i})^{n_i}$ then we clearly have $\beta\cdot e=e$.
By Lemma~\ref{lemma acts alpha} and Lemma~\ref{lemma s(k)}(b), we also have
$$\alpha\cdot e=(e_3-e_2)\w (e_4-e_2)\w \cdots \w (e_p-e_2)\w (-e_2)=(-1)^{s(3,p)+1} e=(-1)^2 e=e\,.$$
\end{proof}

\begin{lemma}\label{lemma sigma}
Let $1\neq \sigma\in E$, and let $q\in\NN$ be such that 
$$\sigma=(x_1^1,\ldots,x_p^1)\cdots (x_1^q,\ldots,x_p^q)\,,$$
where 
$\{x_i^s: 1\leq i\leq p\,, 1\leq s\leq q\}=\supp(\sigma)\subseteq \{p+1,\ldots,n\}$ and $x_p^q=n$. Let further $u\in D$ be such
that $u=\sum_{b\in\mathcal{B}} \lambda_b b$, for $\lambda_b\in F$. Suppose that $\sigma\cdot u=u$.
Then one has the following:

\smallskip

{\rm (a)}\, $\sum_{k=2}^p(-1)^k \lambda_{\hk,x_i^q}=0$, for every $i\in\{1,\ldots,p-1\}$;

\smallskip

{\rm (b)}\, $\sum_{k=2}^p(-1)^{k+1}  \lambda_{\hk,x_i^s}=\sum_{k=2}^p(-1)^{k+1}\lambda_{\hk,x_1^s}$, for
$i\in\{1,\ldots,p\}$ and $1\leq s\leq q-1$.
\end{lemma}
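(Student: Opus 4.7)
My plan is to compare, for well-chosen basis elements $b'\in\mathcal{B}$, the coefficient of $b'$ in $\sigma\cdot u$ with the coefficient $\lambda_{b'}$ in $u$, using the hypothesis $\sigma\cdot u=u$. The action of $\sigma$ on individual $e_i$'s is given by Lemma~\ref{lemma acts alpha}, and the reduction of wedges to basis form is handled by Lemma~\ref{lemma s(k)} and Corollary~\ref{cor e}.

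For part~(a), I would observe that $\sigma\cdot u=u$ implies $\sigma^a\cdot u=u$ for every $a\in\{1,\ldots,p-1\}$, and a short computation shows that $\sigma^a$ again has the cycle structure prescribed by the lemma, with $q$-th cycle $(y_1,\ldots,y_p)$ satisfying $y_p=n$ and $y_{p-1}=x_{p-a}^q$. I would then compute the coefficient of $e=e_2\w\cdots\w e_p$ on both sides of $\sigma^a\cdot u=u$. Since $\sigma^a$ fixes $\{2,\ldots,p\}$ pointwise and permutes $\supp(\sigma)$ within itself, the only basis elements $b$ that can contribute to $e$ in $\sigma^a\cdot u$ are $b=e$ (contributing $\lambda_e$, since $e\in D^{P_n}$ by Corollary~\ref{cor e}) and $b=\he_k\w e_{x_{p-a}^q}$ for $k\in\{2,\ldots,p\}$; the latter triggers the exceptional expansion in Lemma~\ref{lemma acts alpha}(c), whose $j=k$ term contributes $-(-1)^{s(k+1,p)}e$ by Lemma~\ref{lemma s(k)}(b). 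Because $p$ is odd, $-(-1)^{s(k+1,p)}=(-1)^k$, and comparing coefficients of $e$ yields $\sum_{k=2}^p(-1)^k\lambda_{\hk,x_{p-a}^q}=0$; letting $a$ range through $\{1,\ldots,p-1\}$ then covers every $i\in\{1,\ldots,p-1\}$.

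For part~(b), I would argue by induction on $i\in\{1,\ldots,p\}$; the case $i=1$ is trivial. For $i\geq 2$, fix $l\in\{2,\ldots,p\}$ and compare coefficients of $\he_l\w e_{x_i^s}$ on the two sides of $\sigma\cdot u=u$. A case analysis identifies two families of contributing basis elements $b$: the \emph{simple} ones $b=\he_l\w e_{x_{i-1}^s}$ (contributing $\lambda_{\hl,x_{i-1}^s}$) and $b=\he_l\w e_{x_{p-1}^q}$ (contributing $-\lambda_{\hl,x_{p-1}^q}$ via the $j=x_i^s$ term of the expansion in Lemma~\ref{lemma acts alpha}(c)), together with the \emph{complex} ones, namely, for each $l'\in\{2,\ldots,p\}\setminus\{l\}$, the unique basis element whose index set is $\{2,\ldots,p\}\setminus\{l,l'\}\cup\{x_{p-1}^q,x_{i-1}^s\}$. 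Summing the resulting equations with weights $(-1)^{l+1}$ over $l\in\{2,\ldots,p\}$: the $\lambda_{\hl,x_{p-1}^q}$ terms vanish by~(a), and the complex contributions should cancel pairwise---denoting by $c_{l,l'}$ the coefficient at $\he_l\w e_{x_i^s}$ produced by the complex basis element attached to $\{l,l'\}$, I expect $(-1)^{l+1}c_{l,l'}+(-1)^{l'+1}c_{l',l}=0$ for every unordered pair. This would leave $\sum_{l=2}^p(-1)^{l+1}\lambda_{\hl,x_i^s}=\sum_{l=2}^p(-1)^{l+1}\lambda_{\hl,x_{i-1}^s}$, and iterating completes the induction.

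The main obstacle is the sign bookkeeping for the complex contributions, which is delicate because the basis form of $b$ and of $\sigma\cdot b$ depends on whether $x_{p-1}^q<x_{i-1}^s$ or the reverse, and the sign produced when reducing $\he_{l,l'}\w e_l$ or $\he_{l,l'}\w e_{l'}$ to basis form also depends on $l\lessgtr l'$. In each of these cases I would compute $c_{l,l'}$ and $c_{l',l}$ directly via Lemma~\ref{lemma s(k)} and the parity of $p$, and check that $(-1)^{l+1}c_{l,l'}+(-1)^{l'+1}c_{l',l}$ always reduces to $(-1)^{l+l'+1}+(-1)^{l+l'+2}=0$.
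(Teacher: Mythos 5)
Your proposal is correct and takes essentially the same route as the paper: part (a) by comparing the coefficient of $e$ in $\sigma^a\cdot u$ and $u$ using the exceptional expansion of $e_{(\sigma^a)^{-1}(n)}$, and part (b) by comparing coefficients of $\he_l\w e_{x_i^s}$ and summing against the weights $(-1)^{l+1}$, so that the two-index (``complex'') contributions cancel in pairs while the $\lambda_{\hl,x_{p-1}^q}$ terms vanish by part (a). The sign bookkeeping you defer is exactly the computation recorded in the paper's table of coefficients and the two displayed identities that follow it, and it works out precisely as you predict.
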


\begin{proof}
Let
$x\in\{x_i^s: 1\leq i\leq p,\, 1\leq s\leq q-1\}$, and let $k\in\{2,\ldots,p\}$. Suppose that $b\in\mathcal{B}$ is such that
$\he_k\w e_x$ occurs with non-zero coefficient in $\sigma\cdot b$. Then

\smallskip

(i)\, $b=\he_k\w e_{\sigma^{-1}(x)}$, or

(ii)\, $b=\he_k\w e_{x_{p-1}^q}$, or

(iii)\, $b=\he_{k,k_2}\w e_{\sigma^{-1}(x)}\w e_{x_{p-1}^q}$ and $\sigma^{-1}(x)<x_{p-1}^q$, for some $k<k_2\leq p$, or

(iv)\,  $b=\he_{k_1,k}\w e_{\sigma^{-1}(x)}\w e_{x_{p-1}^q}$ and $\sigma^{-1}(x)<x_{p-1}^q$, for some $2\leq k_1<k$, or

(v)\, $b=\he_{k,k_2}\w  e_{x_{p-1}^q}\w e_{\sigma^{-1}(x)}$ and $\sigma^{-1}(x)>x_{p-1}^q$, for some $k<k_2\leq p$, or

(vi)\,  $b=\he_{k_1,k}\w  e_{x_{p-1}^q}\w e_{\sigma^{-1}(x)}$ and $\sigma^{-1}(x)>x_{p-1}^q$, for some $2\leq k_1<k$.

\smallskip

\noindent
If $b$ is one of the basis elements in (i)--(vi) then
the following table records $\sigma \cdot b$ as well as the coefficient at 
$\he_k\w e_x$  in $\sigma \cdot b$, which is obtained using Lemma~\ref{lemma s(k)}.

\medskip

\begin{tabular}{|l|l|c|}\hline
$b$& $\sigma\cdot b$ & coefficient\\\hline\hline
$\he_k\w e_{\sigma^{-1}(x)}$&$\he_k\w e_x$& $1$\\\hline
$\he_k\w e_{x_{p-1}^q}$&$\he_k\w \sum_{y=2}^{n-1}(-e_y)$& $-1$\\\hline
$\he_{k,k_2}\w e_{\sigma^{-1}(x)}\w e_{x_{p-1}^q}$&$\he_{k,k_2}\w e_x\w \sum_{y=2}^{n-1}(-e_y)$&$(-1)^{1+s(k_2+1,p)+1}$\\\hline
$\he_{k_1,k}\w e_{\sigma^{-1}(x)}\w e_{x_{p-1}^q}$&$\he_{k_1,k}\w e_x\w \sum_{y=2}^{n-1}(-e_y)$&$(-1)^{1+s(k_1+1,p)}$\\\hline
$\he_{k,k_2}\w e_{x_{p-1}^q}\w e_{\sigma^{-1}(x)}$&$\he_{k,k_2}\w \sum_{y=2}^{n-1}(-e_y)\w e_x$&$(-1)^{1+s(k_2+1,p)}$\\\hline
$\he_{k_1,k}\w e_{x_{p-1}^q}\w e_{\sigma^{-1}(x)}$&$\he_{k_1,k}\w \sum_{y=2}^{n-1}(-e_y)\w e_x$&$(-1)^{s(k_1+1,p)}$\\\hline

\end{tabular}

\bigskip

Now note that $(-1)^{1+s(k_2+1,p)+1}=(-1)^{1+p-k_2+1}=(-1)^{k_2+1}$ and $(-1)^{1+s(k_1+1,p)}=(-1)^{1+p-k_1}=(-1)^{k_1}.$
Since $\sigma\cdot u=u$, this shows that
\begin{equation}\label{eqn sum 5}
\lambda_{\hk,x}=\lambda_{\hk,\sigma^{-1}(x)}-\lambda_{\hk,x_{p-1}^q}+\sum_{k_2=k+1}^p(-1)^{k_2+1}\lambda_{\widehat{k,k_2},\sigma^{-1}(x),x_{p-1}^q}+\sum_{k_1=2}^{k-1}(-1)^{k_1}\lambda_{\widehat{k_1,k},\sigma^{-1}(x),x_{p-1}^q}\,
\end{equation}
if $\sigma^{-1}(x)<x_{p-1}^q$ and
\begin{equation}\label{eqn sum 5'}
\lambda_{\hk,x}=\lambda_{\hk,\sigma^{-1}(x)}-\lambda_{\hk,x_{p-1}^q}-\sum_{k_2=k+1}^p(-1)^{k_2+1}\lambda_{\widehat{k,k_2},x_{p-1}^q,\sigma^{-1}(x)}-\sum_{k_1=2}^{k-1}(-1)^{k_1}\lambda_{\widehat{k_1,k},x_{p-1}^q,\sigma^{-1}(x)}\,
\end{equation}
if $\sigma^{-1}(x)>x_{p-1}^q$.
Moreover,
\begin{align*}
&\sum_{k=2}^p(-1)^{k+1}\left(\sum_{k_2=k+1}^p(-1)^{k_2+1}\lambda_{\widehat{k,k_2},\sigma^{-1}(x),x_{p-1}^q}+\sum_{k_1=2}^{k-1}(-1)^{k_1}\lambda_{\widehat{k_1,k},\sigma^{-1}(x),x_{p-1}^q}\right)\\
&=\sum_{k=2}^p\sum_{l=k+1}^p ((-1)^{k+1}(-1)^{l+1}+(-1)^k(-1)^{l+1}) \lambda_{\widehat{k,l},\sigma^{-1}(x),x_{p-1}^q}=0\,
\end{align*}
if $\sigma^{-1}(x)<x_{p-1}^q$, and 
\begin{align*}
&\sum_{k=2}^p(-1)^{k+1}\left(-\sum_{k_2=k+1}^p(-1)^{k_2+1}\lambda_{\widehat{k,k_2},x_{p-1}^q,\sigma^{-1}(x)}-\sum_{k_1=2}^{k-1}(-1)^{k_1}\lambda_{\widehat{k_1,k},x_{p-1}^q,\sigma^{-1}(x)}\right)\\
&=-\sum_{k=2}^p\sum_{l=k+1}^p ((-1)^{k+1}(-1)^{l+1}+(-1)^k(-1)^{l+1}) \lambda_{\widehat{k,l},x_{p-1}^q,\sigma^{-1}(x)}=0\,
\end{align*}
if $\sigma^{-1}(x)>x_{p-1}^q$.
Hence, from (\ref{eqn sum 5}) and (\ref{eqn sum 5'}) we get
\begin{equation}\label{eqn sum 2}
\sum_{k=2}^p(-1)^{k+1}\lambda_{\hk,x_i^s}=\sum_{k=2}^p(-1)^{k+1}\lambda_{\hk,\sigma^{-1}(x_i^s)}+\sum_{k=2}^p(-1)^k\lambda_{\hk,x_{p-1}^q}\,,
\end{equation}
for every $i\in\{1,\ldots,p\}$ and $1\leq s\leq q-1$.

\smallskip

We also have $\sigma^i \cdot u=u$, for $i=1,\ldots,p-1$. 
To compare the coefficient at $e$ in $u$ and in $\sigma^i\cdot u$, let $i\in\{1,\ldots,p-1\}$ and suppose 
that $b\in\mathcal{B}$ is such that $e$ occurs in $\sigma^i\cdot b$ with non-zero coefficient. 
Then either $b=e$ and $e=\sigma^i\cdot  e$, or
$b=\he_k\w e_{\sigma^{-i}(x_{p}^q)}$, for some $k\in\{2,\ldots,p\}$. Moreover, in the latter case we have
$\sigma^i\cdot b=\he_k\w (-e_2-e_3-\cdots -e_{n-1})$, where $e$ occurs with coefficient
$$(-1)^{s(k+1,p)+1}=\begin{cases}
1&\text{ if } 2\mid k\,,\\
-1&\text{ if } 2\nmid k\,,
\end{cases}$$
by Lemma~\ref{lemma s(k)}. So we obtain 
$\lambda_e=\lambda_e+\sum_{k=2}^p(-1)^k \lambda_{\hk,\sigma^{-i}(x_p^q)}$,
for $i\in\{1,\ldots,p-1\}$, that is, 
\begin{equation}\label{eqn sum 3}
0=\sum_{k=2}^p(-1)^k \lambda_{\hk,x_j^q}\,,
\end{equation}
for $j\in\{1,\ldots,p-1\}$, which proves assertion~(a). Now assertion~(b) follows from (\ref{eqn sum 2}) and (\ref{eqn sum 3}) with
$j=p-1$.
\end{proof}


Next we shall show that $D(E)\neq \{0\}$, where $E$ is the elementary abelian group
in \ref{nota e}. In order to do so, we want to apply Proposition~\ref{prop Brauer} with $b_0=e$.

\begin{lemma}\label{lemma TrP}
Let $P$ be a maximal subgroup of $E$. If $u\in D^P$ then
$e$ occurs in $\Tr_P^E(u)$ with coefficient $0$.
\end{lemma}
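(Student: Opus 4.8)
The plan is to make the coefficient of $e$ in $\Tr_P^E(u)$ completely explicit as an alternating sum of coordinates of $u$, and then to kill that sum using the invariance of $u$ under the various cyclic subgroups of $P$ through Lemma~\ref{lemma sigma}. Since $E$ is elementary abelian and $P$ is maximal, $E/P$ has order $p$, so fixing any $g\in E\smallsetminus P$ gives $\Tr_P^E(u)=\sum_{i=0}^{p-1}g^iu$, and I must show that the coefficient of $e$ in this sum vanishes for every $u\in D^P$. The computation of that coefficient rests on the fact that, by Lemma~\ref{lemma acts alpha} and Lemma~\ref{lemma s(k)}, only the basis vectors $e$ and $\he_k\w e_x$ can feed into a coefficient of $e$ (vectors with two or more high indices cannot produce $e$, and the $e$--term contributes $p\lambda_e=0$). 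A short sign count, of exactly the kind carried out in the proof of Lemma~\ref{lemma sigma} and using $(-1)^{s(k+1,p)}=(-1)^{k+1}$, then expresses the coefficient of $e$ as, up to sign, $\sum_x c(x)$, where $c(x):=\sum_{k=2}^{p}(-1)^{k+1}\lambda_{\hk,x}$ and $x$ runs over a set of high points depending on $g$.

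I would organise the argument by whether $\alpha=(1,\dots,p)\in P$. If $\alpha\notin P$, take $g=\alpha$; then the relevant set is \emph{all} high points, so the coefficient of $e$ equals $\pm\sum_{\text{high }x}c(x)$. Here Lemma~\ref{lemma max in E} supplies a subgroup $Q\leq\prod_{i\geq 2}\prod_jE_{p^i,j}$ of $P$ acting fixed point freely on $\{p+1,\dots,n\}$; choosing $\sigma\in Q$ whose cycles cover every high point, Lemma~\ref{lemma sigma}(a) gives $c(x)=0$ on the non‑$n$ points of the cycle of $\sigma$ through $n$, while Lemma~\ref{lemma sigma}(b) shows $c$ is constant on each of the remaining cycles, so each such cycle contributes $p\cdot(\text{const})=0$. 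Summing over all cycles of $\sigma$ yields $\sum_{\text{high }x}c(x)=0$, as required. If instead $\alpha\in P$, I would choose $g$ with $\supp(g)\subseteq\{p+1,\dots,n\}$. When the stabiliser of $n$ inside $\prod_{i\geq 2}\prod_jE_{p^i,j}$ is not contained in $P$, such a $g$ may be taken to fix $n$; by Lemma~\ref{lemma acts alpha}(b) it then permutes $\mathcal B$ and fixes $e$ (as in Corollary~\ref{cor e}), so the coefficient of $e$ in $g^iu$ is $\lambda_e$ for each $i$ and the total is $p\lambda_e=0$.

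The genuinely delicate case is $\alpha\in P$ with every coset representative moving $n$ (equivalently, $P$ containing the full stabiliser of $n$ and meeting the last factor $E_{p^t,m_t}$ in a hyperplane $S$). Here $g$ may be taken inside $E_{p^t,m_t}$, and the coefficient of $e$ becomes $\pm\sum_x c(x)$ with $x$ ranging only over the non‑$n$ points of the $g$‑cycle through $n$. To handle it I would first exploit $\alpha$‑invariance: comparing the coefficient of $e$ in $u$ and in $\alpha^iu$ forces $\sum_{\text{high }x}\lambda_{\hk,x}=0$ for every $k$, and combining this with the invariance under the stabiliser of $n$ (which renders $\lambda_{\hk,\cdot}$ constant, hence summing to $0$ in characteristic $p$, over each block not containing $n$) localises the identity to the last block. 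One then applies Lemma~\ref{lemma sigma} to the nontrivial elements of $S\leq P$. The main obstacle lies precisely here: the $S$‑relations control $c$ only along $S$‑orbits, which are disjoint from the $g$‑cycle through $n$, so they cannot simply be transported onto that cycle; closing the argument requires the finer relations coming from $\alpha$‑invariance, which couple the coordinates $\lambda_{\hk,x}$ to those indexed by basis vectors with two high indices, to be fed back in alongside the $S$‑relations. Keeping the signs consistent across all of these explicit actions is the principal bookkeeping difficulty.
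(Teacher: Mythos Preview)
Your treatment of the case $\alpha\notin P$ follows the paper's argument essentially verbatim (one small imprecision: a single $\sigma\in Q$ covering \emph{all} high points need not exist; the paper instead picks, for each block $\supp(E_{p^l,j_l})$, an element $\sigma(l,j_l)\in Q$ whose support contains that block, and sums block by block---but this is a cosmetic fix). The subcase $\alpha\in P$ with $n\notin\supp(g)$ is also identical to the paper.

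The genuine gap is in your ``delicate case'' $\alpha\in P$, $n\in\supp(g)$. You correctly identify the target coefficient as (up to sign) $\sum_{i=1}^{p-1}\sum_{k=2}^{p}(-1)^{k}\lambda_{\hk,x_i^q}$, where $x_1^q,\ldots,x_{p-1}^q$ are the non-$n$ points of the $g$-cycle through $n$. You then try to attack it by extracting from $\alpha$-invariance the relations $\sum_{\text{high }x}\lambda_{\hk,x}=0$ and combining these with $S$-relations from the hyperplane $S=P\cap E_{p^t,m_t}$. As you yourself note, this does not close: the $S$-orbits are disjoint from the $g$-cycle through $n$, and the global relation $\sum_x\lambda_{\hk,x}=0$ only controls the sum over \emph{all} high $x$, not over the $p-1$ specific points you need. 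Your proposed remedy---feeding in ``finer relations'' coupling $\lambda_{\hk,x}$ to two-high-index coefficients---is not carried out and it is not clear it can be.

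The paper's device here is both simpler and decisive, and it extracts \emph{different} information from $\alpha$-invariance. Instead of comparing the coefficient of $e$ in $u$ and $\alpha^i u$, compare the coefficient of $\he_{i+1}\w e_x$ for each fixed high $x$. A direct computation with Lemma~\ref{lemma acts alpha} and Lemma~\ref{lemma s(k)} shows that the only $b\in\mathcal{B}$ for which $\he_{i+1}\w e_x$ occurs in $\alpha^i b$ is $b=\he_{\alpha^{-i}(1)}\w e_x$, with coefficient $1$. Hence $\lambda_{\widehat{i+1},x}=\lambda_{\widehat{\alpha^{-i}(1)},x}$, i.e.
\[
\lambda_{\hk,x}=\lambda_{\widehat{p-k+2},\,x}\qquad\text{for every }k\in\{2,\ldots,p\}\text{ and every high }x.
\]
Since $k$ is even if and only if $p-k+2$ is odd, the terms of $\sum_{k=2}^{p}(-1)^{k}\lambda_{\hk,x}$ cancel in pairs for each individual $x$, and in particular for each $x=x_i^q$. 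No localisation, no $S$-relations, and no two-high-index coefficients are needed.
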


\begin{proof}
Set $\alpha:=(1,2,\ldots,p)$. Let $u\in D^P$, and write $u=\sum_{b\in \mathcal{B}} \lambda_b  b$, where $\lambda_b\in F$.
We shall treat the case where $\alpha\in P$ and the case where $\alpha\notin P$ separately.

\smallskip

\underline{Case 1:} $\alpha\in P$. Then there is some $1\neq g\in \prod_{i=2}^t\prod_{j=1}^{m_i} E_{p^i,j}$
with $g\notin P$. Thus $\{1,g,g^2,\ldots,g^{p-1}\}$ is a set of representatives of the left cosets of $P$ in $E$, so that we get 
$\Tr_P^E(u)=u+g\cdot u+\cdots +g^{p-1}\cdot u=\sum_{b\in\mathcal{B}}\sum_{i=0}^{p-1} \lambda_b  (g^i \cdot b)$.

\smallskip

Since $g\neq 1$ and $t\geq 2$, we have
$$g=(x_1^1,\ldots,x_p^1)\cdots (x_1^q,\ldots,x_p^q)\,,$$
for some $q\geq p$ and $\{x_i^s: 2\leq i\leq p,\, 1\leq s\leq q\}=\supp(g)$. 

Suppose first that $n\notin\supp(g)$, and let $b\in\mathcal{B}$. Let further $i\in\{0,\ldots,p-1\}$, and suppose
that $e$ occurs in $g^i \cdot b$ with non-zero coefficient. Then we must have $b=e$, in which case
$\sum_{i=0}^{p-1} g^i \cdot b=pe=0$, by Corollary~\ref{cor e}; in particular, $e$ occurs in $\Tr_P^E(u)$ with coefficient 0.

\smallskip

So we may now suppose that $n\in\supp(g)$. Moreover, we may suppose that $x_p^q=n$. Let $i\in\{0,\ldots,p-1\}$, and let $b\in\mathcal{B}$ be such that $e$ occurs in $g^i \cdot b$ with non-zero coefficient.
If $i=0$ then we must of course have $b=e=g^0\cdot e$. If $i\geq 1$ then $b=e$, or $b=\he_k\w e_{g^{-i}(x_p^q)}$, for some
$k\in\{2,\ldots,p\}$. In the latter case, we have 
$g^i\cdot (\he_k\w e_{g^{-i}(x_p^q)})=\he_k\w (-e_2-e_3-\cdots -e_{n-1})$, in which $e$ occurs with coefficient
$$(-1)^{s(k+1,p)+1} =\begin{cases}
1&\text{ if } 2\mid k\,,\\
-1&\text{ if } 2\nmid k\,,
\end{cases}$$
by Lemma~\ref{lemma s(k)}.
Consequently, the coefficient at $e$ in $\Tr_P^E(u)$ equals
\begin{equation}\label{eqn e in Tru}
p\lambda_e+\sum_{i=1}^{p-1}\left(\mathop{\sum_{k=2}^p}_{2\mid k}\lambda_{\hk,x_i^q}-\mathop{\sum_{l=2}^p}_{2\nmid l}\lambda_{\hl,x_i^q}\right)=\sum_{i=1}^{p-1}\sum_{k=2}^p (-1)^k\lambda_{\hk,x_i^q}\,.
\end{equation}
Next we use the fact that $u\in D^P$ to show that this coefficient is indeed 0. Since $\alpha\in P$, we, in particular, have
$u=\alpha^i \cdot u$, for every $i\in\{1,\ldots,p-1\}$. So let $i\in\{1,\ldots,p-1\}$, and
let $x\in\{x_1^q,\ldots,x_p^q\}$. Suppose that $b\in\mathcal{B}$ is such that $\he_{i+1}\w e_x$ occurs in 
$\alpha^i \cdot b$ with non-zero coefficient. 
Then from Lemma~\ref{lemma acts alpha} we deduce that $b=\he_{\alpha^{-i}(1)}\w e_x$. Moreover, we have
$$\alpha^i\cdot (\he_{\alpha^{-i}(1)}\w e_x)=(e_{i+2}-e_{i+1})\w\cdots\w (e_p-e_{i+1})\w (e_2-e_{i+1})\w\cdots\w (e_i-e_{i+1})\w (e_x-e_{i+1})\,.$$
Thus, by Lemma~\ref{lemma s(k)}, the coefficient at $\he_{i+1}\w e_x$ in $\alpha^i\cdot (\he_{\alpha^{-i}(1)}\w e_x)$ equals
$(-1)^{s(i+2,p)(i-1)}=1$. Letting $i$ vary over $\{1,\ldots,p-1\}$ and comparing the coefficient at 
$\he_{i+1}\w e_x$ in $u$ and in $\alpha^i \cdot u$, we deduce that $\lambda_{\hk,x}=\lambda_{\widehat{p-k+2},x}$, for
$k\in\{2,\ldots, (p+1)/2\}$ and every $x\in\{x_1^q,\ldots,x_p^q\}$. Since $k$ is even if and only if $p-k+2$ is odd, we 
conclude that the right-hand side of (\ref{eqn e in Tru}) is 0, as claimed. This completes the proof in case 1.

\medskip

\underline{Case 2:} $\alpha\notin P$, so that $\{1,\alpha,\alpha^2,\ldots,\alpha^{p-1}\}$ is a set of representatives for the cosets of $P$ in $E$, and we get 
$\Tr_P^E(u)=u+\alpha \cdot u+\cdots +\alpha^{p-1}\cdot  u$.
We determine the coefficient at $e$ in $\Tr_P^E(u)=u+\alpha \cdot u+\cdots +\alpha^{p-1} \cdot u$.
Let $i\in\{0,\ldots,p-1\}$, and let $b\in\mathcal{B}$ be such that $e$ occurs
in $\alpha^i \cdot b$ with non-zero coefficient. If $i=0$ then $b=e=\alpha^0 \cdot e$. So let $i\geq 1$. Then, by Lemma~\ref{lemma acts alpha},
we either have $b=e$, or $b=\he_{\alpha^{-i}(1)}\w e_x$, for some $x\in\{p+1,\ldots,n-1\}$. Moreover, in the latter case,
$$\alpha^i\cdot b=(e_{i+2}-e_{i+1})\w (e_{i+3}-e_{i+1})\w\cdots \w (e_p-e_{i+1})\w (e_2-e_{i+1})\w\cdots\w (e_i-e_{i+1})\w (e_x-e_{i+1})\,.$$
So the coefficient at $e$ in $\alpha^i\cdot (\he_{\alpha^{-i}(1)}\w e_x)$ equals
$$(-1)^{s(i+2,p)(i-1)+s(i+2,p)+1}=\begin{cases}
1&\text{ if } 2\nmid i\,,\\
-1&\text{ if } 2\mid i\,.
\end{cases}$$
Since $i$ is even if and only if $\alpha^{-i}(1)$ is even, we deduce from this that the coefficient at $e$ in $u+\alpha \cdot  u+\cdots +\alpha^{p-1}\cdot u$ equals
\begin{equation}\label{eqn e in Tru a}
p\lambda_e+\sum_{x=p+1}^{n-1}\sum_{k=2}^p(-1)^{k+1}\lambda_{\hk,x}=\sum_{x=p+1}^{n-1}\sum_{k=2}^p(-1)^{k+1}\lambda_{\hk,x}\,.
\end{equation}
To show that this coefficient is 0, we again exploit the fact that $u\in D^P$. In fact, we shall show that
\begin{equation}\label{eqn sum 0}
\mathop{\sum_{x\in\supp(E_{p^l,j_l})}}_{x<n} \sum_{k=2}^p (-1)^{k+1} \lambda_{\hk,x}=0\,,
\end{equation}
for every $l\in\{2,\ldots, t\}$ and $1\leq j_l\leq m_l$. For each such $l$ and $j_l$, there is, by Lemma~\ref{lemma max in E},
some element $\sigma(l,j_l)\in P$ such that $\supp(E_{p^l,j_l})\subseteq \supp(\sigma(l,j_l))\subseteq \{p+1,\ldots,n\}$. 
Fixing $l$ and $j_l$, we write
$$\sigma:=\sigma(l,j_l)=(x_1^1,\ldots,x_p^1)\cdots (x_1^q,\ldots, x_p^q)\,,$$
for some $q\geq |E_{p^l,j_l}|/p$ and $\supp(\sigma)=\{x_i^j: 1\leq i\leq p,\, 1\leq j\leq q\}$.

\smallskip

\underline{Case 2.1:} $n\notin\supp(\sigma)$, or equivalently, $\supp(\sigma)\cap \supp(E_{p^t,m_t})=\emptyset$.
Let $x\in\supp(\sigma)$, let $k\in\{2,\ldots,p\}$, and let $b\in \mathcal{B}$ be such that $\he_k\w e_{x}$ occurs
in $\sigma \cdot b$ with non-zero coefficient. This forces $b=\he_k\w e_{\sigma^{-1}(x)}$, and
$\sigma\cdot (\he_k\w e_{\sigma^{-1}(x)})=\he_k\w e_x$. Thus, $\lambda_{\hk,x}=\lambda_{\hk,\sigma^{-1}(x)}$.
This shows that $\lambda_{\hk,x_1^s}=\lambda_{\hk,x_i^s}$, for all $i\in\{1,\ldots,p\}$ and $s\in\{1,\ldots,q\}$.
By rearranging commuting $p$-cycles in $\sigma$, we may assume that there is some
$1\leq q_0\leq q$ such that $\supp(E_{p^l,j_l})=\{x_i^s: 1\leq i\leq p,\, 1\leq s\leq q_0\}$. Then
\begin{equation}\label{eqn sum 1}
\sum_{x\in\supp(E_{p^l,j_l})}\sum_{k=2}^p(-1)^{k+1} \lambda_{\hk,x}=\sum_{i=1}^p\sum_{s=1}^{q_0}\sum_{k=2}^p(-1)^{k+1} \lambda_{\hk,x_i^s}=p\sum_{s=1}^{q_0}\sum_{k=2}^p (-1)^{k+1}\lambda_{\hk,x_1^s}=0\,,
\end{equation}
as desired.

\smallskip

\underline{Case 2.2:} $n\in\supp(\sigma)$.
Then we may suppose that $x_p^q=n$. If $(l,j_l)\neq (t,m_t)$, then we may
further suppose that there is some $1\leq q_1<q$ such that 
$\supp(E_{p^l,j_l})=\{x_i^s: 1\leq i\leq p,\, 1\leq s\leq q_1\}$. 
By Lemma~\ref{lemma sigma}(b), we then get
\begin{equation}\label{eqn sum 4}
\sum_{x\in\supp(E_{p^l,j_l})}\sum_{k=2}^p (-1)^{k+1}\lambda_{\hk,x}=\sum_{i=1}^p\sum_{s=1}^{q_1}\sum_{k=2}^p(-1)^{k+1} \lambda_{\hk,x_i^s}=p\sum_{s=1}^{q_1}\sum_{k=2}^p(-1)^{k+1}  \lambda_{\hk,x_1^s}=0\,.
\end{equation}

If $(l,j_l)=(t,m_t)$ then we may suppose that there is 
$1\leq q_2\leq q$ such that $\supp(E_{p^t,m_t})=\{x_i^s: 1\leq i\leq p,\, q_2\leq s\leq q\}$.
In this case, Lemma~\ref{lemma sigma} gives
\begin{align*}
\mathop{\sum_{x\in\supp(E_{p^t,m_t})}}_{x<n}\sum_{k=2}^p(-1)^{k+1} \lambda_{\hk,x}&=\sum_{i=1}^p\sum_{s=q_2}^{q-1}\sum_{k=2}^p(-1)^{k+1}\lambda_{\hk,x_i^s}+\sum_{i=1}^{p-1}\sum_{k=2}^p(-1)^{k+1}\lambda_{\hk,x_i^q}\\
&=p\sum_{s=q_2}^{q-1}\sum_{k=2}^p(-1)^{k+1} \lambda_{\hk,x_1^s}-\sum_{i=1}^{p-1}\sum_{k=2}^p(-1)^{k}\lambda_{\hk,x_i^q}=0\,.
\end{align*}

To summarize, we have now verified equation (\ref{eqn sum 0}), which together with (\ref{eqn e in Tru}) shows that the coefficient
at $e$ in $\Tr_P^E(u)$ is 0. This now completes the proof in case 2 and, thus, of the lemma.
\end{proof}

As a direct consequence of Lemma~\ref{lemma TrP},  Corollary~\ref{cor e}, Proposition~\ref{prop Brauer}, and \cite[(1.3)]{Br} we thus have proved the following

\begin{prop}\label{prop Brauer D(E)}
Let $n\in\NN$ be such that $n=p+\sum_{i=2}^tm_ip^i$, for some $t\geq 2$, $m_2,\ldots,m_t\in\NN_0$
with $m_t\neq 0$. Let further $D:=D^{(n-p+1,1^{p-1})}$, and let $Q\leq \mathfrak{S}_n$ be a vertex of $D$. 
Then $D(E(1,m_2,\ldots,m_t))\neq \{0\}$; in particular, $E(1,m_2,\ldots,m_t)\leq_{\mathfrak{S}_n} Q$.
\end{prop}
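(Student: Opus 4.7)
The plan is to apply Proposition \ref{prop Brauer} with $M = D$, $B = \mathcal{B}$, $P = E := E(1, m_2, \ldots, m_t)$, and $b_0 = e = e_2 \wedge \cdots \wedge e_p$. Once conditions (i) and (ii) of that proposition are checked, the conclusion $e + \Tr^E(D) \in D(E) \setminus \{0\}$ immediately gives $D(E) \neq \{0\}$, and then the statement recalled in \ref{noth Brauer}(b) (i.e., \cite[(1.3)]{Br}) forces $E \leq_{\mathfrak{S}_n} Q$ for any vertex $Q$ of the indecomposable module $D$.

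For condition (i), I would argue that $e \in D^E$ essentially by the computation in the proof of Corollary \ref{cor e}: the $p$-cycle $\alpha = (1, 2, \ldots, p)$ fixes $e$, and any permutation whose support is disjoint from $\{1, \ldots, p\}$ fixes each of $e_2, \ldots, e_p$ individually, hence fixes $e$. Since by the construction in \ref{noth E_n} every element of $E$ factors as $\alpha^j \tau$ with $\tau \in \prod_{i \geq 2}\prod_j E_{p^i, j}$ supported in $\{p+1, \ldots, n\}$, this yields $e \in D^E$. For condition (ii), the hypothesis is exactly the content of Lemma \ref{lemma TrP}: for every maximal subgroup $P$ of $E$ and every $u \in D^P$, the coefficient of $e$ in $\Tr_P^E(u)$ is zero. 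Both conditions thus hold, and Proposition \ref{prop Brauer} applies to produce a nonzero class in $D(E)$.

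The proof itself is therefore a short assembly of previously established results; the genuine obstacle does not lie here but in the preparatory Lemma \ref{lemma TrP}, whose proof splits into the two cases $\alpha \in P$ versus $\alpha \notin P$ and requires intricate bookkeeping of the hook-basis signs $(-1)^{s(k, p)}$ together with the fixed-point relations supplied by Lemma \ref{lemma sigma}. Once that lemma is in hand, the present proposition is essentially a one-step deduction.
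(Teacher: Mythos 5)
Your proposal is correct and matches the paper's own argument exactly: the paper likewise obtains condition (i) from Corollary~\ref{cor e} (since $E\leq P_n$), condition (ii) from Lemma~\ref{lemma TrP}, and then concludes via Proposition~\ref{prop Brauer} and \cite[(1.3)]{Br}. Nothing further is needed.
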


\begin{rem}\label{rem Brauer D(E)}
Again consider the $p$-adic expansion $n=p+\sum_{i=2}^rn_ip^i$, where $r\geq 2$ and $n_r\neq 0$.
Note that Proposition~\ref{prop Brauer D(E)}, in particular, holds for $t=r$ and $m_1=1,\ldots, m_r=n_r$.
Thus the elementary abelian subgroup $E(1,n_2,\ldots,n_r)\leq P_n$ is $\mathfrak{S}_n$-conjugate to a subgroup
of every vertex of $D$. This settles item (ii) at the beginning of this section and  completes the proof of Theorem~\ref{thm main}.
\end{rem}


\bigskip

\noindent
{\sc S.D.: Department of Mathematics,
University of Kaiserslautern,\\
P.O. Box 3049,
67653 Kaiserslautern,
Germany}\\
{\sf danz@mathematik.uni-kl.de}

\bigskip

\noindent
{\sc E.G.: Department of Mathematics, Royal Holloway University of London,\\
Egham TW20
0EX, United Kingdom}\\
{\sf Eugenio.Giannelli.2011@live.rhul.ac.uk}

\end{document}